\documentclass[default]{sn-jnl}


\usepackage{graphicx}%
\usepackage{multirow}%
\usepackage{amsmath,amssymb,amsfonts}%
\usepackage{amsthm}%
\usepackage{mathrsfs}%
\usepackage[title]{appendix}%
\usepackage{xcolor}%
\usepackage{textcomp}%
\usepackage{manyfoot}%
\usepackage{booktabs}%
\usepackage{algorithm}%
\usepackage{algorithmicx}%
\usepackage{algpseudocode}%
\usepackage{listings}%



\newtheorem{theorem}{Theorem}
\newtheorem{proposition}[theorem]{Proposition}%
\newtheorem{example}{Example}%
\newtheorem{remark}{Remark}%

\newtheorem{definition}{Definition}%

\raggedbottom

\begin{document}

\title[Some spectral results for certain positive operators in Hilbert Spaces]{Some spectral results for certain positive operators in Hilbert Spaces}


\author*[1]{\fnm{Rashid } \sur{A. }}\email{rashid441188@gmail.com}
\author*[2]{\fnm{P Sam } \sur{Johnson }}


\affil*[1]{\orgdiv{Department of Mathematical and Computational Sciences}, \orgname{National Institute of Technology Karnataka (NITK)}, \orgaddress{\street{Surathkal}, \city{Mangaluru}, \postcode{575 025}, \state{Karnataka}, \country{India}}}
\affil*[2]{\orgdiv{Department of Mathematical and Computational Sciences}, \orgname{National Institute of Technology Karnataka (NITK)}, \orgaddress{\street{Surathkal}, \city{Mangaluru}, \postcode{575 025}, \state{Karnataka}, \country{India}}}




\abstract{This paper investigates spectral properties of certain classes of positive operators originated from different matrices appeared in linear complementarity problem. These positive operators play a crucial role in various areas of mathematics and its applications, including operator theory, functional analysis, and quantum mechanics. Understanding their spectral behavior is essential for analyzing the dynamics and stability of systems governed by such operators. P-matrix is one of the important types of matrices appearing in linear complementarity problems. In this research, with the help of spectral results we have given a factorization for P-matrix, as the product of two non-trivial P-matrices. We also focus on elucidating spectral properties such as eigenvalues, approximate eigenvalues and spectral values associated with certain positive operators.}

\keywords{Spectrum of bounded operators, P-matrix, sufficient matrix, P-operator, positive definite operator.}


\pacs[MSC Classification]{47B40,15B48, 15A15, 47B65}

\maketitle
\section{Introduction}
Eigenvalues and eigenvectors, key concepts in linear algebra, have wide-ranging applications in both science and engineering. Certain properties of a matrix can be analyzed if the eigenvalues of the matrix are known to us. For example, if a matrix has no non-zero eigenvalue, then it is invertible. The spectrum is the infinite-dimensional analogue of the set of matrix
eigenvalues. Several core results of matrix theory are extended to linear operators on a Hilbert space, where
the proofs are typically quite different and one often needs additional assumptions on the operators.

\vspace{.1cm}

An $n \times n$ matrix $A \in \mathbb{R}^{n \times n}$ is called a P-matrix if all its principal minors are positive. The matrix $A \in \mathbb{R}^{n \times n}$ reverses the sign of the vector $x \in \mathbb{R}^{n}$ if $x_i(Ax)_i \leq 0$, for all $i \in \langle n \rangle$, where $x_i$ is the $i$-th component of the vector $x \in \mathbb{R}^{ n}$ and $ \langle n \rangle =\{1,2,3,\ldots,n\}$. Fiedler and Pták \cite{MR142565} have shown that $A$ is a P-matrix if and only if $A$ has sign non-reversal property, that is, if $x\neq 0 \in \mathbb{R}^n$, then there exists some index $j$ for which we have $x_j(Ax)_j >0$. Cottle et al. (\cite{MR3396730}) shown that given a real square matrix $A$, the linear complementarity problem $LCP(A,q)$ has a unique solution for each vector $q \in \mathbb{R}^n$ if and only if $A$ is a P-matrix. The P-matrices encompass such notable classes as the Hermitian positive deﬁnite matrices, the M-matrices, the totally positive matrices, the real diagonally dominant matrices with positive diagonal entries, and many more. The study of P-matrices has extended to the notion of P-operator to infinite-dimensional Banach spaces having a Schauder basis by  Kannan and Sivakuma \cite{MR3485836} and P-operator to infinite-dimensional Hilbert spaces relative to various orthonormal basis by  Rashid A. and P. Sam Johnson (\cite{rashid}). 

\vspace{.1cm}
 Spectral theory on Hilbert space forms the cornerstone of modern functional analysis, providing powerful tools for understanding the behavior of linear operators on infinite-dimensional spaces. In particular, the study of P-operators within this framework holds significant importance, with wide-ranging applications in diverse fields such as quantum mechanics, signal processing, and partial differential equations.
\vspace{.1cm}
The theory of P-Matrices (P-operators) explores transformations that preserve positivity, a concept crucial for modeling various physical phenomena and mathematical processes through linear complementarity problem. In this context, P-property is intimately linked with the notions of order, stability, and growth, offering profound insights into the underlying structure of the space under consideration.
\vspace{.1cm}

This paper serves as a primer to delve into the spectral theory of P-operators and sufficient operators on Hilbert space. We will embark on a journey to explore the spectral properties of these operators, uncovering their eigenvalues, eigenvectors, and spectral decompositions. Moreover, we will investigate the connections between P-property and the spectral characteristics of these operators, shedding light on their geometric, algebraic, and analytic properties.
\vspace{.1cm}
Through this exploration, we aim to provide a comprehensive understanding of spectral theory on Hilbert space for P-operators, equipping the reader with the necessary tools to tackle advanced topics in functional analysis and its applications.

\section{Preliminaries}

   \begin{theorem}(\cite{1})\label{t111}
Let $A \in M_n(\mathbb{C})$ be a P-matrix. Then the following statements hold.
\begin{enumerate}
    \item The matrix $A+D$ is a P-matrix, for any diagonal matrices $D$ with non-negative entries.
    
    \item  The inverse $ A^{-1}$ of $A$ is a P-matrix.
    
\end{enumerate}
\end{theorem}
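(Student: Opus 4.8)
The plan is to avoid working directly with principal minors and instead use the Fiedler--Pták sign non-reversal characterization recalled in the introduction, since it reduces both claims to short pointwise arguments.

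For part (1), write $D=\operatorname{diag}(d_1,\dots,d_n)$ with each $d_i\ge 0$. Given an arbitrary $x\ne 0$ in $\mathbb{R}^n$, the P-property of $A$ supplies an index $j$ with $x_j(Ax)_j>0$. Then
\[
x_j\big((A+D)x\big)_j = x_j(Ax)_j + d_j x_j^{2} \ge x_j(Ax)_j > 0,
\]
so $A+D$ again enjoys the sign non-reversal property and is therefore a P-matrix.

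For part (2), first note that $A$ is invertible, since its determinant is a principal minor and hence positive. Let $y\ne 0$ and put $x=A^{-1}y$, which is again nonzero. Applying the sign non-reversal property of $A$ to $x$ yields an index $j$ with $x_j(Ax)_j>0$, that is, $(A^{-1}y)_j\,y_j>0$. Thus $A^{-1}$ has the sign non-reversal property and is a P-matrix. (Alternatively one may invoke the Jacobi identity $\det\big((A^{-1})[S]\big)=\det\big(A[S^{c}]\big)/\det A$ to see directly that every principal minor of $A^{-1}$ is a quotient of positive numbers.)

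I do not expect a genuine obstacle in either part; the one point needing a moment of care is that in (1) the index $j$ produced by the P-property automatically satisfies $x_j\ne 0$ (because $x_j(Ax)_j>0$), so the correction term $d_j x_j^{2}$ is honestly non-negative and cannot destroy the strict inequality.
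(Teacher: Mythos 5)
Your two arguments are correct, but note that the paper itself offers no proof of this statement: it is quoted as a known preliminary result with a citation, so there is no internal proof to compare against. Your route through the Fiedler--Pt\'ak sign non-reversal property is the standard short proof and both steps are sound: in (1) the perturbation $d_jx_j^2\ge 0$ cannot destroy the strict inequality at the witnessing index, and in (2) the substitution $x=A^{-1}y$ transfers the non-reversal property to $A^{-1}$ directly. The only point worth flagging is that the theorem, as stated, takes $A\in M_n(\mathbb{C})$, while the sign non-reversal characterization recalled in the paper (and proved by Fiedler and Pt\'ak) is a statement about real matrices acting on real vectors; your argument therefore proves the result verbatim only in the real case. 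This is easily repaired without changing the spirit of your proof: for (2) your parenthetical Jacobi identity $\det\bigl((A^{-1})[S]\bigr)=\det\bigl(A[S^{c}]\bigr)/\det A$ works over $\mathbb{C}$ as it stands, and for (1) one can expand $\det\bigl((A+D)[S]\bigr)$ by multilinearity in the columns to get a sum of terms of the form $\det\bigl(A[S\setminus T]\bigr)\prod_{i\in T}d_i$ with $T\subseteq S$, all non-negative and the $T=\emptyset$ term positive, which gives the complex case directly. With that caveat addressed, the proposal is a complete and arguably more self-contained treatment than the paper's bare citation.
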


\begin{theorem}(\cite{MR142565})
	Let $A\in \mathbb R^{n\times n}$. The following statements are equivalent :
	\begin{enumerate}
		\item $A$ is a $P$-matrix.
		\item The real eigenvalues of the principal submatrices of $A$ are positive. 
	\end{enumerate}
\end{theorem}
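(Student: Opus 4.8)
The plan is to establish the two implications separately. The forward direction $(1)\Rightarrow(2)$ will rest on Theorem~\ref{t111}(1), while the reverse direction $(2)\Rightarrow(1)$ will use the elementary fact that the determinant of a real square matrix equals the product of its eigenvalues, counted with algebraic multiplicity. A fact I would use in both directions is that every principal submatrix of a P-matrix is itself a P-matrix: writing $B=A_{\alpha\alpha}$ for the submatrix of $A$ indexed by $\alpha\subseteq\langle n\rangle$, each principal minor of $B$ is also a principal minor of $A$ and hence positive.

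For $(1)\Rightarrow(2)$, I would take an arbitrary principal submatrix $B$ of the P-matrix $A$ (again a P-matrix, by the remark above) together with a real number $\lambda\le 0$. Then $-\lambda I$ is a diagonal matrix with non-negative entries, so Theorem~\ref{t111}(1) shows that $B-\lambda I=B+(-\lambda I)$ is a P-matrix; in particular $\det(B-\lambda I)>0$, so $\lambda$ cannot be an eigenvalue of $B$. Hence every real eigenvalue of $B$ is positive, and since $B$ ranges over all principal submatrices of $A$, this is precisely statement (2).

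For $(2)\Rightarrow(1)$, I would fix $\alpha\subseteq\langle n\rangle$ and show that the principal minor $\det(A_{\alpha\alpha})$ is positive. Splitting the eigenvalues of the real matrix $B=A_{\alpha\alpha}$ into its real ones and its non-real ones (the latter coming in conjugate pairs $\mu,\overline{\mu}$, each pair contributing $|\mu|^2>0$ to the product of eigenvalues), and invoking the hypothesis that the real eigenvalues of the principal submatrix $B$ are positive, one obtains
\[
\det(A_{\alpha\alpha})=\prod_{\mu}\mu>0,
\]
where the product is over all eigenvalues of $B$ with multiplicity. Since $\alpha$ is arbitrary, all principal minors of $A$ are positive, i.e.\ $A$ is a P-matrix.

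I do not expect a genuine analytic obstacle; the argument is structural. The one place that needs care is making sure the hypothesis of (2) really does transfer to each principal submatrix appearing in the proof — it does, because a principal submatrix of $A_{\alpha\alpha}$ is itself a principal submatrix of $A$ — and checking that the shift $-\lambda I$ with $\lambda\le 0$ legitimately falls under the scope of Theorem~\ref{t111}(1).
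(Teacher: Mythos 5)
Your argument is correct: the forward direction via Theorem~\ref{t111}(1) applied to $B+(-\lambda I)$ for $\lambda\le 0$, and the reverse direction via the determinant-as-product-of-eigenvalues computation (real eigenvalues positive by hypothesis, non-real ones pairing into factors $|\mu|^2>0$), is exactly the classical Fiedler--Pt\'ak reasoning. The paper itself states this theorem only as a cited preliminary from \cite{MR142565} and offers no proof, so there is nothing in the text to diverge from; your write-up fills that gap correctly, including the needed observation that principal submatrices of a P-matrix are P-matrices and that the hypothesis in (2) passes to principal submatrices of $A_{\alpha\alpha}$.
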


\begin{theorem}(\cite{MR142565})\label{spect1}
	Let $A$ be a real square matrix with all non-positive off-diagonal elements. The following statements are equivalent :
	\begin{enumerate}
		\item $A$ is a P-matrix.
		\item Each real eigenvalue of $A$ is positive.
		\item The real part of each eigenvalue of $A$ is positive. 
	\end{enumerate}
\end{theorem}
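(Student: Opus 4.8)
The plan is to prove the cycle $(1)\Rightarrow(2)$, $(2)\Leftrightarrow(3)$ and $(2)\Rightarrow(1)$, using the special structure of a matrix $A=(a_{ij})$ with non-positive off-diagonal entries together with the Perron--Frobenius theorem. The starting point is the substitution $A = sI - B$ with $s := 1 + \max_i a_{ii}$, which makes $B = sI - A$ entrywise non-negative: its diagonal entries are $s - a_{ii} \ge 1 > 0$ and its off-diagonal entries are $-a_{ij} \ge 0$. The eigenvalues of $A$ are then exactly the numbers $s - \mu$ as $\mu$ ranges over the eigenvalues of $B$. I would record at the outset that every principal submatrix $A_{JJ}$ of $A$ again has non-positive off-diagonal entries and satisfies $A_{JJ} = sI - B_{JJ}$ with the principal submatrix $B_{JJ} \ge 0$, so the same analysis is available for all of them.

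The key lemma I would isolate: if $B \ge 0$, the Perron--Frobenius theorem gives $\rho(B) \in \sigma(B)$ and $|\mu| \le \rho(B)$ for every $\mu \in \sigma(B)$; hence $\lambda_0 := s - \rho(B)$ is a \emph{real} eigenvalue of $A$, and for any eigenvalue $\mu$ of $B$ one has $\operatorname{Re}(s - \mu) = s - \operatorname{Re}(\mu) \ge s - |\mu| \ge \lambda_0$. Thus the eigenvalue of $A$ of smallest real part is attained at the real eigenvalue $\lambda_0$. Granting this, $(2)\Rightarrow(3)$ is immediate: if all real eigenvalues of $A$ are positive then $\lambda_0 > 0$, so every eigenvalue of $A$ has real part $\ge \lambda_0 > 0$; and $(3)\Rightarrow(2)$ is trivial since a real eigenvalue equals its own real part. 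The implication $(1)\Rightarrow(2)$ is immediate from the preceding theorem (take $J = \langle n \rangle$ among the principal submatrices), or from the Fiedler--Pt\'ak sign non-reversal property recalled in the introduction.

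For $(2)\Rightarrow(1)$ I would again invoke the preceding theorem, so it suffices to check that every real eigenvalue of every principal submatrix $A_{JJ}$ is positive. Fix $J$. Hypothesis $(2)$ applied through the key lemma to $A$ itself forces $\lambda_0 = s - \rho(B) > 0$, i.e. $\rho(B) < s$. Since $B_{JJ}$ is a principal submatrix of the non-negative matrix $B$, monotonicity of the spectral radius gives $\rho(B_{JJ}) \le \rho(B) < s$ (concretely, padding $B_{JJ}$ by zero rows and columns produces a matrix $\widetilde{B}_{JJ}$ with $0 \le \widetilde{B}_{JJ} \le B$ and $\rho(\widetilde{B}_{JJ}) = \rho(B_{JJ})$, and $\rho$ is monotone on the non-negative matrices). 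Applying the key lemma to $A_{JJ} = sI - B_{JJ}$, the smallest real part of an eigenvalue of $A_{JJ}$ is $s - \rho(B_{JJ}) > 0$, so in particular $A_{JJ}$ has all real eigenvalues positive; the preceding theorem then yields that $A$ is a P-matrix, closing the equivalences.

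I expect the main obstacle to be making the ``bottom of the spectrum is a real eigenvalue'' step airtight --- in particular quoting the correct form of the Perron--Frobenius theorem so that it covers reducible $B$ (one still has $\rho(B) \in \sigma(B)$ and $|\mu| \le \rho(B)$), and justifying the monotonicity of the spectral radius under passage to principal submatrices. The rest is routine bookkeeping with the shift $A = sI - B$ and repeated use of the already-stated characterization of P-matrices via the real eigenvalues of principal submatrices.
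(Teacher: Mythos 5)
Your proposal is mathematically correct, but note that the paper itself offers no proof of this statement: it is quoted verbatim from Fiedler and Pt\'ak \cite{MR142565} as a preliminary, so there is no in-paper argument to compare against. Your route --- writing $A=sI-B$ with $s=1+\max_i a_{ii}$ so that $B\ge 0$, invoking the Perron--Frobenius theorem in its general (possibly reducible) form to get $\rho(B)\in\sigma(B)$ and hence a real eigenvalue $\lambda_0=s-\rho(B)$ of minimal real part, and then using monotonicity of the spectral radius under passage to principal submatrices to handle $(2)\Rightarrow(1)$ via the preceding characterization of P-matrices --- is essentially the classical M-matrix argument, and all the steps you flagged as delicate (reducible Perron--Frobenius, $\rho(B_{JJ})\le\rho(B)$ via zero-padding, choosing a real eigenvector for a real eigenvalue in $(1)\Rightarrow(2)$) are standard and correctly handled. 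The only external input is the preceding theorem of the paper (P-matrix iff all real eigenvalues of all principal submatrices are positive), which in Fiedler--Pt\'ak is proved independently of the Z-matrix result, so there is no circularity; alternatively, $(1)\Rightarrow(2)$ can be obtained even more cheaply by noting that for $\lambda\le 0$ the matrix $A-\lambda I=A+|\lambda|I$ is again a P-matrix by Theorem \ref{t111}, hence has positive determinant, so $\lambda$ cannot be an eigenvalue. In short: the proof is sound and fills a gap the paper leaves to the literature.
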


Let $\lambda_1, \lambda_2, \ldots, \lambda_n$ be $n$ real numbers or complex numbers having both conjugate pairs in the collection. The $k^{th}$ elementary symmetric function is defined by
$$\sigma_k(\lambda_1, \lambda_2, \ldots, \lambda_n) = \sum_{1\leq i_1<i_2<\cdots <i_k\leq n} \quad  \prod_{t=1}^k \lambda_{i_t}.$$ 
The elementary symmetric functions characterize the spectra of P-matrices and P$_0$-matrices as shown in the following result.
\begin{theorem} (\cite{HERSHKOWITZ198381})\label{spect-1}
	The set $\{\lambda_1, \lambda_2, \ldots, \lambda_n\}$ is a spectrum of some P-matrix iff $$\sigma_k(\lambda_1, \lambda_2, \ldots, \lambda_n)>0, \ k=1, 2, \ldots, n.$$
\end{theorem}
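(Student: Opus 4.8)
The statement splits into necessity and sufficiency, and the two are of very different difficulty. For \emph{necessity}, suppose $A$ is an $n\times n$ P-matrix whose eigenvalues are $\lambda_1,\dots,\lambda_n$. The plan is simply to expand the characteristic polynomial in two ways. On the one hand $\det(tI-A)=\prod_{i=1}^{n}(t-\lambda_i)=\sum_{k=0}^{n}(-1)^k\sigma_k(\lambda_1,\dots,\lambda_n)\,t^{\,n-k}$. On the other hand, the classical expansion of $\det(tI-A)$ shows that its coefficient of $t^{\,n-k}$ equals $(-1)^k$ times the sum of all $k\times k$ principal minors of $A$. Comparing coefficients yields the identity $\sigma_k(\lambda_1,\dots,\lambda_n)=\sum_{|S|=k}\det A_S$, where $A_S$ ranges over the principal submatrices of $A$ of order $k$. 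Since $A$ is a P-matrix, each $\det A_S>0$, hence $\sigma_k(\lambda_1,\dots,\lambda_n)>0$ for $k=1,\dots,n$.

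For \emph{sufficiency} I must construct a P-matrix whose characteristic polynomial is $p(t):=\prod_{i=1}^{n}(t-\lambda_i)=\sum_{k=0}^{n}(-1)^k\sigma_k\,t^{\,n-k}$ (a real polynomial, since the list of $\lambda_i$ is closed under conjugation). A useful preliminary is the substitution $t=-s$, which gives $p(-s)=(-1)^n\sum_{k=0}^{n}\sigma_k\,s^{\,n-k}$ with $\sigma_0=1$, so $p$ has constant sign $(-1)^n$ on $(-\infty,0]$; in particular every real eigenvalue in the list is automatically positive. For the matrix itself, the route I would take is an explicit structured realization by an unreduced upper Hessenberg matrix
$$H=\begin{pmatrix} h_{11} & h_{12} & \cdots & h_{1n}\\ s_1 & h_{22} & \cdots & h_{2n}\\ & \ddots & \ddots & \vdots\\ & & s_{n-1} & h_{nn}\end{pmatrix},\qquad s_1,\dots,s_{n-1}>0,$$
with positive diagonal. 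The reason for this shape is that any principal submatrix of $H$ whose index set is a union of maximal runs of consecutive integers is block upper triangular with those runs as its diagonal blocks; hence $H$ is a P-matrix precisely when $\det H_{\{i,i+1,\dots,j\}}>0$ for every contiguous index set, and these are the only minors one needs to control. One then solves the $n$ identities $\sum_{|S|=k}\det H_S=\sigma_k$, $k=1,\dots,n$, for the (highly non-unique) entries of $H$: first fix a positive diagonal with sum $\sigma_1$, which settles $k=1$ and makes every non-contiguous $2$-minor positive; then, order by order in $k$, use a fresh band of entries above the diagonal together with the freedom in the $s_i$ to meet the identity for $\sigma_k$ while keeping each newly created contiguous minor positive. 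Showing that this can always be done is exactly the content of the theorem.

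That last verification is where I expect essentially all the difficulty to lie: forcing \emph{every} principal minor (not merely the leading minors or $\det H$) to be positive, and showing the system is feasible whenever $\sigma_1,\dots,\sigma_n>0$. The subtlety is genuine. The companion matrix of $p$ is immediately disqualified by its vanishing diagonal entries. A block-diagonal sum of small P-matrices realizing the real linear and irreducible-quadratic factors of $p$ fails, because a $2\times 2$ real P-matrix must have positive trace whereas $p$ may carry a conjugate pair of roots with negative real part, so the blocks must genuinely interact. And a diagonal-plus-rank-one attempt forces the rank-one residues to be the partial-fraction coefficients of $p$, which cannot be kept inside the P-region unless $p$ has only positive real roots. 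Thus the hypotheses $\sigma_k>0$ must be used in a coordinated way, precisely to certify at each stage of the construction that the diagonal entry being introduced and every new contiguous minor can be taken positive. Carefully organizing these simultaneous positivity constraints --- equivalently, proving that the locus ``characteristic polynomial $=p$'' meets the open set ``all principal minors $>0$'' exactly when all $\sigma_k$ are positive --- is the crux of the argument.
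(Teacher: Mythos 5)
The necessity half of your argument is fine and essentially the standard one: comparing the coefficient of $t^{\,n-k}$ in $\det(tI-A)$ with the expansion of $\prod_i(t-\lambda_i)$ gives $\sigma_k(\lambda_1,\dots,\lambda_n)=\sum_{|S|=k}\det A_S$, which is positive for a P-matrix. For context, the paper itself offers no proof of this statement --- it is quoted in the preliminaries from \cite{HERSHKOWITZ198381} --- so your attempt has to be judged on its own, and the test is the converse direction.

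There the proposal has a genuine gap: the sufficiency half is a construction plan, not a proof, and you say so yourself (``Showing that this can always be done is exactly the content of the theorem''). Your Hessenberg observation is correct and useful --- for an unreduced upper Hessenberg matrix with positive subdiagonal, a principal submatrix indexed by a union of maximal runs of consecutive indices is block upper triangular, so only the contiguous principal minors need to be controlled --- but the decisive claim, that the system ``$\sum_{|S|=k}\det H_S=\sigma_k$ for $k=1,\dots,n$, with every contiguous principal minor positive'' is feasible whenever $\sigma_1,\dots,\sigma_n>0$, is asserted, not argued. The ``order by order in $k$'' scheme is not even well specified: the $k$-th identity couples contiguous minors of all sizes up to $k$ with the already-chosen diagonal and subdiagonal entries, these are simultaneous polynomial inequalities rather than independent conditions, and nothing is shown about why a fresh band of superdiagonal entries together with the $s_i$ gives enough freedom to hit the prescribed value of $\sigma_k$ without destroying the positivity of minors fixed at earlier stages. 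Since this is precisely the substantive content of the theorem (as your own discussion of the failed companion, block-diagonal, and diagonal-plus-rank-one attempts illustrates), the converse implication remains unproven; to complete the argument you would need to carry out this feasibility analysis in detail, or replace it by the construction in the cited source \cite{HERSHKOWITZ198381}.
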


\noindent In the above result, if $\sigma_k(\lambda_1, \lambda_2, \ldots, \lambda_n)\geq 0, k=1, 2, \ldots, n$, it characterizes P$_0$-matrices. 

\begin{example}
	Let $A=\begin{pmatrix}
	-1&-1\\
	4&3
	\end{pmatrix}$ be a matrix whose spectrum is $\{1,1\}$. Though the symmetric functions $\sigma_k(\{1,1\})$ are positive, for  $k=1,2$, the matrix $A$ is not a P-matrix. However, by Theorem (\ref{spect-1}), the set $\{1,1\}$ is a spectrum of the P-matrix $\begin{pmatrix}
	1&0\\
	0&1
	\end{pmatrix}.$
\end{example}

If $\{\lambda_1, \lambda_2, \ldots, \lambda_n\}$ is the spectrum of a matrix having positive sums of principal minor, then it satisfies $$\sigma_k(\lambda_1, \lambda_2, \ldots, \lambda_n)>0, k=1, 2, \ldots, n.$$ Moreover, the converse is also true as shown in Theorem (\ref{spect-1}) : $\sigma_k(\lambda_1, \lambda_2, \ldots, \lambda_n)$ is positive for $k=1, 2, \ldots, n$ if and only if it is a spectrum of some P-matrix ($P_0$-matrix). Such a set $S$ is called a P-set ($P_0$-set).

\cite{Kellogg1972} proved that elements of a P-set cannot lie in the given wedge around the negative axis. More precisely he proved the following result.
\begin{theorem}(\cite{Kellogg1972})
	\begin{enumerate}
		\item If $\mathcal{K}=\{\lambda_1, \lambda_2, \ldots, \lambda_n\}$ is a P-set, then 
  \begin{equation}\label{bound-1}
      |arg \ \lambda_i|<\frac{n-1}{n}\pi, \quad i=1, 2, \ldots,n. 
  \end{equation}

		\item If $\mathcal{K}=\{\lambda_1, \lambda_2, \ldots, \lambda_n\}$, $\lambda_i\neq 0$, $i=1, 2, \ldots, n$, is a P$_0$ set, then $$|arg \ \lambda_i|\leq \frac{n-1}{n}\pi,  \quad i=1, 2, \ldots,n.$$
\noindent Equality holds in the above inequality if and only if \begin{eqnarray*}
			\sigma_k(\lambda_1, \lambda_2, \ldots, \lambda_n) & = & 0, \quad \quad k=1, 2, \ldots, n-1, \\
			\sigma_n(\lambda_1, \lambda_2, \ldots, \lambda_n) & > & 0. 
		\end{eqnarray*}
		
	\end{enumerate}
\end{theorem}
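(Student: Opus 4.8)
The plan is to translate the two hypotheses into statements about the coefficients of the monic polynomial
\begin{equation*}
p(t)=\prod_{i=1}^{n}(t-\lambda_i)=\sum_{k=0}^{n}(-1)^{k}\,\sigma_k(\lambda_1,\ldots,\lambda_n)\,t^{\,n-k},\qquad \sigma_0:=1,
\end{equation*}
and to invoke Theorem~(\ref{spect-1}): a P-set satisfies $\sigma_k>0$ for all $k$, a P$_0$-set satisfies $\sigma_k\geq 0$ for all $k$, and in both cases the $\sigma_k$ are real, so $p$ has real coefficients and its roots occur in conjugate pairs. I would fix an index $j$, write $\lambda_j=re^{i\theta}$ with $r>0$, and --- replacing $\lambda_j$ by its conjugate if necessary --- assume $\theta=|\arg \lambda_j|\in[0,\pi]$. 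Since $p(\lambda_j)=0$ and $(-1)^{k}=e^{ik\pi}$, multiplying $\sum_k(-1)^k\sigma_k\lambda_j^{n-k}=0$ by $e^{-in\theta}$ yields the master relation $\sum_{k=0}^{n}\sigma_k r^{n-k}e^{ik\beta}=0$ with $\beta:=\pi-\theta$, and separating real and imaginary parts gives $\sum_k\sigma_k r^{n-k}\cos(k\beta)=0$ and $\sum_k\sigma_k r^{n-k}\sin(k\beta)=0$.

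For part (1) I argue by contradiction: suppose $\theta\geq\frac{n-1}{n}\pi$, i.e.\ $\beta\in[0,\pi/n]$, and take $n\geq 2$. If $\beta=0$, the master relation reads $\sum_k\sigma_k r^{n-k}=0$, which is impossible since $\sigma_0 r^n=r^n>0$. If $\beta>0$, then every term of $\sum_k\sigma_k r^{n-k}\sin(k\beta)$ is $\geq 0$ (because $k\beta\in[0,\pi]$ for all $k$), while the $k=1$ term equals $\sigma_1 r^{n-1}\sin\beta>0$ (as $\sigma_1>0$ for a P-set and $\beta\in(0,\pi)$); hence the sum is strictly positive, a contradiction. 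Therefore $\theta<\frac{n-1}{n}\pi$, which is \eqref{bound-1}.

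For part (2) I assume $\lambda_i\neq 0$ for all $i$ and suppose $\theta>\frac{n-1}{n}\pi$, i.e.\ $\beta\in[0,\pi/n)$. The case $\beta=0$ is excluded exactly as above; if $\beta>0$, then $k\beta\in(0,n\beta]\subset(0,\pi)$ for every $k\in\{1,\ldots,n\}$, so each $\sin(k\beta)>0$, and vanishing of $\sum_k\sigma_k r^{n-k}\sin(k\beta)$ (all terms $\geq 0$) forces $\sigma_1=\cdots=\sigma_n=0$, whence $p(t)=t^n$ and $\lambda_i=0$ for all $i$, a contradiction. Thus $\theta\leq\frac{n-1}{n}\pi$. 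For the equality clause, suppose $\theta=\frac{n-1}{n}\pi$, so $\beta=\pi/n$: since $\sin(k\pi/n)>0$ for $1\leq k\leq n-1$, the imaginary-part relation forces $\sigma_k=0$ there, and then the real-part relation collapses to $r^n-\sigma_n=0$, so $\sigma_n=r^n>0$. Conversely, if $\sigma_1=\cdots=\sigma_{n-1}=0$ and $\sigma_n>0$, then $p(t)=t^n+(-1)^n\sigma_n$, whose roots are the $n$ equally spaced points of modulus $\sigma_n^{1/n}$ solving $t^n=(-1)^{n+1}\sigma_n$; a short check of their arguments shows the two nearest the negative real axis have argument $\pm\frac{n-1}{n}\pi$, so equality is attained.

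I expect the genuinely delicate points to be (i) confirming that \eqref{bound-1} remains \emph{strict} at the boundary value $\beta=\pi/n$, which is exactly where strict positivity $\sigma_k>0$ --- rather than mere nonnegativity --- is used; and (ii) the ``only if'' half of the equality characterization in part (2), where one must extract $\sigma_k=0$ for $1\leq k\leq n-1$ from the imaginary part and then $\sigma_n=r^n>0$ from the real part. Everything else is routine bookkeeping with the arithmetic progression of arguments $\phi_k=(n-k)\theta+k\pi$ (common difference $\pi-\theta$, running from $n\theta$ to $n\pi$) and the elementary fact that a sum of nonzero complex numbers whose arguments lie in a single open half-plane cannot vanish.
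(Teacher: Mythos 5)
The paper itself offers no proof of this statement---it is quoted from Kellogg's 1972 paper as background---so there is nothing internal to compare against; judged on its own, your argument is correct and is essentially the classical route to Kellogg's bound. Translating the P-set/P$_0$-set hypotheses via Theorem (\ref{spect-1}) into sign conditions on the coefficients of $p(t)=\prod_i(t-\lambda_i)$, evaluating at a root of largest argument, and rotating so the terms acquire arguments $k\beta$ with $\beta=\pi-|\arg\lambda_j|$ is exactly the right mechanism: the imaginary-part identity $\sum_k\sigma_k r^{n-k}\sin(k\beta)=0$ with nonnegative terms does all the work, the case split $\beta=0$, $\beta\in(0,\pi/n]$ (strictness from $\sigma_1>0$ for a P-set), $\beta\in(0,\pi/n)$ and $\beta=\pi/n$ is handled correctly, and your converse for the equality clause via the explicit roots of $t^n+(-1)^n\sigma_n$ checks out for both parities of $n$. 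Two small points you should make explicit: (i) in part (1) the step ``write $\lambda_j=re^{i\theta}$ with $r>0$'' deserves a sentence---if some $\lambda_j=0$ then $\sigma_n=0$, contradicting $\sigma_n>0$, so the elements of a P-set are automatically nonzero and their arguments are defined (in part (2) this is a hypothesis); (ii) the inequality \eqref{bound-1} is literally false for $n=1$ (a one-element P-set has argument $0$, not $<0$), so your restriction to $n\ge 2$ is not a gap but the intended scope of the theorem and is worth stating up front. With those remarks added, the proof is complete.
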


In \cite{MR0709362} it is showed that if the number of elements in $\{\lambda_1, \lambda_2, \ldots, \lambda_n\}$ in the right half plane, or in the left half plane, is given, then the bound of Equation (\ref{bound-1}) can be improved, namely, there exists $\alpha$ such that $$|arg \ \lambda_i|<\alpha < \frac{n-1}{n}\pi, \ i=1, 2, \ldots, n.$$
It is also proved that if $\mathcal{K}$ has exactly one element in the right half plane, then 
$$|arg \ \lambda_i|<\frac{2}{3}\pi, \ i=1, 2, \ldots, n.$$
\noindent The above inequality is independent of $n$. 
It was conjectured in \cite{MR0709362} that if a P-matrix $A$ has exactly $k$ eigenvalues in the left half plane ($k$ is an even integer), then 
$$|arg \ \lambda|<\frac{2k+1}{2k+2}\pi, \quad \text{for all } \lambda \in\sigma(A).$$
The following result has given a negative answer to the conjecture.
\begin{theorem}(\cite{HERSHKOWITZ198381}) Let $\mathbf{C}$ be a finite set of complex numbers, consisting of positive numbers and non-real conjugate pairs. Then one can obtain a set with all elementary symmetric functions positive by adding positive numbers to $\mathbf{C}$. 
\end{theorem}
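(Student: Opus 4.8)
The plan is to convert the statement about elementary symmetric functions into one about polynomial coefficients and then apply a classical positivity theorem. The starting observation is that for any finite list $\lambda_1,\dots,\lambda_n$ (multiplicities allowed) one has $\prod_{i=1}^{n}(t+\lambda_i)=\sum_{k=0}^{n}\sigma_k(\lambda_1,\dots,\lambda_n)\,t^{n-k}$, so the list has all elementary symmetric functions positive if and only if the monic real polynomial $\prod_{i}(t+\lambda_i)$ has all coefficients strictly positive. Accordingly, write $\mathbf{C}=\{c_1,\dots,c_r\}\cup\{\lambda_1,\overline{\lambda_1},\dots,\lambda_s,\overline{\lambda_s}\}$ with $c_i>0$ and $\lambda_j\notin\mathbb{R}$, put $P(t):=\prod_{i=1}^{r}(t+c_i)\prod_{j=1}^{s}\bigl(t+\lambda_j\bigr)\bigl(t+\overline{\lambda_j}\bigr)$, and reduce the theorem to: produce positive reals $\alpha_1,\dots,\alpha_N$ so that $P(t)\prod_{m=1}^{N}(t+\alpha_m)$ has all coefficients strictly positive.

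First I would record that $P$ is monic and strictly positive on $[0,\infty)$, since $t+c_i>0$ and $\bigl(t+\lambda_j\bigr)\bigl(t+\overline{\lambda_j}\bigr)=\bigl(t+\operatorname{Re}\lambda_j\bigr)^{2}+\bigl(\operatorname{Im}\lambda_j\bigr)^{2}>0$ there. The quick way to finish is then to invoke the classical theorem of Poincar\'e and P\'olya: a real polynomial positive on $[0,\infty)$ with positive leading coefficient, multiplied by $(1+t)^{N}$ for all sufficiently large $N$, has all coefficients strictly positive. Applying it to $P$ yields an $N$ with $(1+t)^{N}P(t)=P(t)\prod_{m=1}^{N}(t+1)$ having all coefficients positive, i.e.\ adjoining $N$ copies of the positive number $1$ to $\mathbf{C}$ works; by the first paragraph the enlarged collection then has all elementary symmetric functions positive. (If genuinely \emph{distinct} added numbers are wanted, replace the $1$'s by $1+\varepsilon_1,\dots,1+\varepsilon_N$ with small, distinct, positive $\varepsilon_m$, since having positive coefficients is an open condition.)

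For a self-contained proof I would avoid citing Poincar\'e--P\'olya and reduce to quadratics, using that a product of polynomials each with all coefficients positive again has all coefficients positive. It then suffices to handle a single quadratic factor $q(t)=t^{2}+\beta t+\gamma$ with $\gamma=|\lambda_j|^{2}>0$ and (since $\lambda_j\notin\mathbb{R}$) $\beta^{2}<4\gamma$; the factors $t+c_i$ are already fine, a factor with $\beta>0$ is already fine, and $\beta=0$ is cured by one multiplication by $t+1$. For $\beta<0$, multiply $q$ by $(t+1)^{N}$: the coefficient of $t^{k}$ is $\binom{N}{k-2}+\beta\binom{N}{k-1}+\gamma\binom{N}{k}$, which on dividing by $\binom{N}{k-1}$ is positive exactly when $\tfrac{k-1}{N-k+2}+\gamma\,\tfrac{N-k+1}{k}>|\beta|$. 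The infimum of the left-hand side over the relevant $k$ tends, as $N\to\infty$, to $2\sqrt{\gamma}$ (by the AM--GM estimate $\tfrac{c}{1-c}+\gamma\,\tfrac{1-c}{c}\ge 2\sqrt{\gamma}$ for $c\in(0,1)$, the boundary values being large), and the hypothesis $\beta^{2}<4\gamma$ says precisely $|\beta|<2\sqrt{\gamma}$; hence all coefficients are positive once $N$ is large enough, and concatenating these batches of $1$'s over the finitely many bad quadratics completes the argument. The main obstacle is exactly this quantitative step: a conjugate pair close to the negative real axis (so $\gamma$ barely exceeds $\beta^{2}/4$) forces $N$ to be large, and no number of adjoined elements works uniformly over all $\mathbf{C}$ --- matching Kellogg's wedge bound, which only relaxes to $|\arg|<\pi$ as the number of elements grows. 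It is exactly the hypothesis that $\mathbf{C}$ has no negative real entry and no zero (equivalently, every quadratic factor satisfies $|\beta|<2\sqrt{\gamma}$ with strict inequality and every linear factor is $t+c$ with $c>0$) that makes a finite such $N$ available.
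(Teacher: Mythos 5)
Your argument is correct. Note first that the paper does not prove this statement at all --- it is quoted from Hershkowitz's paper \cite{HERSHKOWITZ198381} as background --- so there is no internal proof to compare against; what you have supplied is a genuine self-contained substitute for the cited source. Your route (translate ``all $\sigma_k>0$'' into ``all coefficients of $\prod_i(t+\lambda_i)$ are positive'', observe that the hypothesis on $\mathbf{C}$ makes this polynomial strictly positive on $[0,\infty)$, and then either invoke Poincar\'e--P\'olya or do the explicit $(1+t)^N$ computation factor by factor) is sound: the binomial-ratio inequality, the AM--GM limit $2\sqrt{\gamma}$, and the strictness $|\beta|<2\sqrt{\gamma}$ coming from non-realness of the conjugate pair are exactly the points that need checking, and you check them (the boundary indices $k=0,1$ and $k=N+1,N+2$ are trivially positive, as you indicate). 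This differs from the argument in the cited reference, which proceeds by a short induction: adjoining one sufficiently large positive number replaces $\sigma_k$ by $\sigma_k+M\sigma_{k-1}$, so if the first $j$ symmetric functions are already positive, a large enough $M$ makes the first $j+1$ positive, and finitely many such adjunctions finish. The inductive route is shorter and needs no analysis; yours buys an explicit choice of adjoined numbers (copies of $1$, perturbed if distinctness is wanted) and a quantitative measure of how many are needed in terms of how close a conjugate pair sits to the negative real axis, which connects nicely, as you note, to Kellogg's wedge bound quoted earlier in the paper. The only cosmetic caveat is that your division by $\binom{N}{k-1}$ is meaningless at $k=N+2$, so that coefficient should be handled directly (it equals $1$), but this does not affect correctness.
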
	
\noindent	Every P-matrix can have almost all of its eigenvalues in the left half of the complex plane.
	\begin{theorem}(\cite{HERSHKOWITZ198381})
		There exists a P-matrix all of those eigenvalues, except one when $n$ is even or two when $n$ is odd, have negative real parts.
	\end{theorem}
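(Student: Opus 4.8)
The plan is to reduce the statement, via Theorem~\ref{spect-1}, to a question about polynomials and then build an explicit example. Recall that a conjugation-closed set $\mathcal{S}\subset\mathbb{C}$ with $|\mathcal{S}|=n$ is the spectrum of some P-matrix precisely when all of its elementary symmetric functions $\sigma_1(\mathcal{S}),\dots,\sigma_n(\mathcal{S})$ are positive; equivalently, I want a monic real polynomial $q$ of degree $n$ whose coefficients strictly alternate in sign (so that $(-1)^{n}q(-x)$ has all coefficients positive), and whose roots lie, except for as few as possible, in the open left half-plane. Producing such a $q$ with exactly one root (when $n$ is odd) or exactly two roots (when $n$ is even) having positive real part, and invoking Theorem~\ref{spect-1}, will finish the proof.

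For the construction I would fix $b>0$ and a small parameter $\varepsilon>0$, and take
\[
q(x)=(x-N)\,\bigl(x^{2}+\varepsilon x+b\bigr)^{m}\ \ (n=2m+1),\qquad
q(x)=\bigl(x^{2}-sx+t\bigr)\bigl(x^{2}+\varepsilon x+b\bigr)^{m}\ \ (n=2m+2),
\]
where $N>0$ (resp. $s,t>0$) are still to be chosen. For $\varepsilon<2\sqrt{b}$ the quadratic $x^{2}+\varepsilon x+b$ has a conjugate pair of roots with real part $-\varepsilon/2<0$, so $q$ automatically has $2m$ roots strictly inside the left half-plane, while the remaining factor contributes one positive root $N$ (resp. two roots with real part $s/2>0$). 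Thus the location of the roots is forced by the shape of the factorization; the only freedom left, and the only thing left to arrange, is that all the symmetric functions $\sigma_k(q)$ turn out positive.

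To settle that I would expand $p(x)=(x^{2}+\varepsilon x+b)^{m}=\sum_{i=0}^{2m}c_i x^{i}$, note $c_{2m}=1$ and each $c_i>0$, and record the behaviour as $\varepsilon\to 0$: the even-index coefficients converge, $c_{2\ell}\to\binom{m}{\ell}b^{m-\ell}>0$, while the odd-index ones are $O(\varepsilon)$. Expanding $q$ and comparing with $q(x)=x^{n}-\sigma_1x^{n-1}+\sigma_2x^{n-2}-\cdots$, the requirement that every $\sigma_k$ be positive becomes a finite system of upper and lower bounds on $N$, each bound of the form $c_{i-1}/c_i$, together with $\sigma_n=Nc_0>0$, which holds automatically. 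The decisive point is that every upper bound is a ratio (even index)/(odd index), hence blows up to $+\infty$, while every lower bound is (odd index)/(even index), hence tends to $0$; so for $\varepsilon$ small enough the system is consistent and, for instance, $N=1$ works. The even-$n$ case is handled the same way, with $s,t>0$ fixed arbitrarily beforehand. I expect this bookkeeping---and the realization that sliding the conjugate pair towards the imaginary axis decouples the ``even'' and ``odd'' symmetric functions of $p$---to be the only genuine obstacle; the rest is Theorem~\ref{spect-1} together with a routine expansion.

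Finally I would record why the count is sharp, which is also what pins down the parity dependence: for a real P-matrix $A$ one has $\sigma_1(A)=\operatorname{tr}A>0$, so at least one eigenvalue lies in the closed right half-plane; and $\sigma_n(A)=\det A>0$, while each non-real conjugate pair of eigenvalues contributes a positive factor $|\lambda|^{2}$ to $\det A$, so the number of negative real eigenvalues of $A$ must be even. Hence the number of eigenvalues lying outside the open left half-plane has the same parity as $n$ and is at least one; it is therefore exactly one when $n$ is odd and exactly two when $n$ is even, which is precisely what the polynomials above achieve.
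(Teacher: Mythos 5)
Your construction is correct, and it is essentially the expected argument: the paper states this result without proof (it is quoted from \cite{HERSHKOWITZ198381}), and the natural route in this context is exactly the one you take --- realize a prescribed conjugation-closed multiset as the spectrum of a P-matrix via Theorem~\ref{spect-1} by exhibiting a monic real polynomial with strictly alternating coefficients, namely a right-half-plane factor times $(x^{2}+\varepsilon x+b)^{m}$ with $\varepsilon\to 0$ so that the odd-index coefficients of the power become negligible. Your bookkeeping does close: in the odd case the upper bounds on $N$ are ratios (even index)/(odd index) and blow up while the lower bounds are (odd)/(even) and tend to $0$, so $N=1$ works for small $\varepsilon$; in the even case the odd-index coefficients of $q$ tend to $-s\cdot(\text{positive})$ and the even-index ones to positive limits, so any fixed $s,t>0$ work. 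Note also that Theorem~\ref{spect-1}, as used in the paper (see the example with $\{1,1\}$), allows repeated eigenvalues, so taking the $m$-th power of a single quadratic is legitimate.

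One discrepancy you should flag explicitly: the statement as printed in the paper attaches the exceptions to the wrong parities (``one when $n$ is even or two when $n$ is odd''), whereas your polynomials produce one exceptional eigenvalue when $n$ is odd and two when $n$ is even --- which is the parity in Hershkowitz's original result. Indeed, your closing sharpness argument (trace $=\sigma_1>0$, determinant $=\sigma_n>0$, non-real eigenvalues occurring in conjugate pairs, hence an even number of eigenvalues with negative real part) shows that the parity assignment as literally printed is unattainable. So your proof establishes the corrected statement; the mismatch is a transcription error in the paper, not a gap in your argument.
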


A matrix $A$ all of whose powers are P-matrices is denoted $A\in \mathcal{PM}$ (respectively, $A\in \mathcal{P_\text{0}M}$ if only $A^k\in \mathcal P_0$). If we denote the set of eigenvalues with multiple appearances corresponding to algebraic multiplicities of $A$ by $\sigma(A)$, it is then natural to raise the following question :
$$\text{if } A\in \mathcal PM, \text{ does } \lambda\in \sigma(A) \text{ imply }\lambda>0?$$
The above question has been answered affirmatively by \cite{HERSHKOWITZ198381} for $n\leq 4$ and it is not resolved for $n\geq 5$. 

\begin{definition}(\cite{MR1387274})\label{def_of_sufficient}
    A matrix $A \in \mathbb{R}^{n \times n}$ is called column sufficient (CSU) if $x_i(Ax)_i \leq 0$ for all $i \in \langle n \rangle$, implies $x_i(Ax)_i=0$ for all $i \in \langle n \rangle$. The matrix $A$ is ~~~~~~~~~~~~~~~~~~~~~~~~~~~~~~~~~~~~said to be row sufficient (RSU) if $A^T$ is column sufficient. A matrix
that is both row and column sufficient is simply called sufficient (SU). We denote the class of sufficient matrices by $\mathbf{S}$.
\end{definition}

\section{Factorization of P-matrices Using eigenvalues}

\begin{proposition}\label{specp1}
    Let $A\in \mathbb{R}^{n \times n}$ such that $I+A$ is invertible. We define 
    $$U(A)=(I+A)^{-1}(I-A).$$
    Then the following statements hold:
    \begin{enumerate}
        \item $A=U(U(A))=(I+U(A))^{-1}(I-U(A)).$
        \item $I+U(A)=2(I+A)^{-1}.$
        \item If $A$ is invertible, then $I-U(A)=2(I+A^{-1})^{-1}.$
    \end{enumerate}

\end{proposition}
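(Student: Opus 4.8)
The plan is to treat everything in sight as a polynomial in $A$: the matrices $I+A$, $I-A$, $A$, and hence $(I+A)^{-1}$, all commute pairwise, so products may be regrouped at will. I would prove item (2) first, because it is the identity that makes the composition $U(U(A))$ meaningful and from which (1) and (3) both drop out. Writing $I=(I+A)^{-1}(I+A)$ and adding,
\begin{align*}
I+U(A) &= (I+A)^{-1}(I+A)+(I+A)^{-1}(I-A)\\
       &= (I+A)^{-1}\bigl[(I+A)+(I-A)\bigr] = 2(I+A)^{-1}.
\end{align*}

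An immediate corollary is that $I+U(A)$ is invertible, with $(I+U(A))^{-1}=\tfrac{1}{2}(I+A)$; this is exactly what is needed to justify forming $U(U(A))$, and it is the one point of rigour I would flag (hence the ordering). By the same one-line manipulation,
$$I-U(A) = (I+A)^{-1}\bigl[(I+A)-(I-A)\bigr] = 2(I+A)^{-1}A.$$
Item (1) is then pure substitution: $U(U(A)) = (I+U(A))^{-1}(I-U(A)) = \tfrac{1}{2}(I+A)\cdot 2(I+A)^{-1}A = A$, the cancellation being legitimate because the factors commute.

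For item (3) I would assume $A$ invertible and rewrite $I+A^{-1} = A^{-1}(I+A)$, so that $(I+A^{-1})^{-1} = (I+A)^{-1}A$, and therefore $2(I+A^{-1})^{-1} = 2(I+A)^{-1}A = I-U(A)$ by the formula just obtained. The whole argument is elementary; there is no real obstacle beyond getting the order of steps right — establishing (2) before invoking the well-definedness of $U(U(A))$ in (1) — and keeping careful track of which matrices commute with $(I+A)^{-1}$.
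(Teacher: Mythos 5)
Your proposal is correct, and for items (2) and (3) it is in substance identical to the paper's computation (the paper likewise writes $I\pm U(A)=(I+A)^{-1}\bigl[(I+A)\pm(I-A)\bigr]$ and, for (3), regroups $2(I+A)^{-1}A=2\bigl[A^{-1}(I+A)\bigr]^{-1}=2(I+A^{-1})^{-1}$, which is your rewriting $I+A^{-1}=A^{-1}(I+A)$ read in the other direction). Where you genuinely diverge is item (1): the paper proves it \emph{first}, by manipulating the defining relation $(I+A)U(A)=I-A$ into $A\bigl(I+U(A)\bigr)=I-U(A)$ and then arguing separately that $I+U(A)$ is injective (equivalently $-1\notin\sigma(U(A))$) before solving for $A$. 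You instead establish (2) first, which hands you invertibility of $I+U(A)$ for free together with the explicit inverse $\tfrac12(I+A)$, and then (1) is a one-line substitution $\tfrac12(I+A)\cdot 2(I+A)^{-1}A=A$ (where, as you could have noted, no commutation is even needed -- the factors cancel adjacently). Your ordering is arguably the cleaner one, since it eliminates the separate kernel argument the paper needs to justify forming $(I+U(A))^{-1}$; the paper's route has the mild advantage of exhibiting the structural identity $A(I+U(A))=I-U(A)$, which explains \emph{why} $U$ is an involution independently of the closed-form expressions. Both arguments are complete and correct.
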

\begin{proof}
\begin{enumerate}
    \item We have $(I+A)U(A)=I-A$. So, $U(A)+AU(A)=I-A$. Hence 
\begin{equation}\label{spece1}
    A(I+U(A))=I-U(A).
\end{equation}
It is noted that if $(I+U(A))x=0$, then $x=0$. Therefore $-1 \notin \sigma(U(A))$. Since $(I+U(A))^{-1}$ and $I+U(A)$ commute, by the equation (\ref{spece1}), we get that $A=U(U(A))$. 
\item We have 
\begin{eqnarray*}
    I+U(A)&=&I+(I+A)^{-1}(I-A)\\
    &=&(I+A)^{-1}[I+A+I-A]\\
    &=&2(I+A)^{-1}.
\end{eqnarray*}
\item We have 
\begin{eqnarray*}
    I-U(A)&=&I-(I+A)^{-1}(I-A)\\
    &=&(I+A)^{-1}[I+A-I+A]\\
    &=&2(I+A)^{-1}A.
\end{eqnarray*}
\noindent If $A$ is invertible, then we have 
\begin{eqnarray*}
    I-U(A)&=&2(I+A)^{-1}A\\
    &=&2(I+A)^{-1}(A^{-1})^{-1}\\
    &=&2[A^{-1}(I+A)]^{-1}\\
    &=&2(I+A^{-1})^{-1}.
\end{eqnarray*}
\end{enumerate}

\end{proof}
\begin{theorem}
    Let $A\in \mathbb{R}^{n \times n}$ be a P-matrix. Then $A$ is a product of P-matrices.
\end{theorem}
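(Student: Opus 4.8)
The plan is to exhibit an explicit factorization $A = B\,C$ with $B = I+A$ and $C = (I+A)^{-1}A$, and then to check that each factor is a non-trivial P-matrix. Note first that every P-matrix is invertible, since its determinant is one of its principal minors and hence positive; in particular $A^{-1}$ exists. Also, $I+A$ is a P-matrix by Theorem \ref{t111}(1) (it is $A$ plus the non-negative diagonal matrix $I$), so $I+A$ is invertible and the identity $A = (I+A)\bigl[(I+A)^{-1}A\bigr]$ is legitimate. Thus $B = I+A$ is a P-matrix for free.

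The substantive step --- which I expect to be the only real obstacle --- is to see that $C = (I+A)^{-1}A$, whose P-character is not visible from this expression, is also a P-matrix. The key is to rewrite it via the transform $U(A) = (I+A)^{-1}(I-A)$ of Proposition \ref{specp1}: the computation there shows $I - U(A) = 2(I+A)^{-1}A$ and, since $A$ is invertible, $I - U(A) = 2(I+A^{-1})^{-1}$; comparing the two gives $C = (I+A)^{-1}A = (I+A^{-1})^{-1}$ (equivalently, $(I+A^{-1})^{-1} = \bigl(A^{-1}(A+I)\bigr)^{-1} = (A+I)^{-1}A$ directly). Now $A^{-1}$ is a P-matrix by Theorem \ref{t111}(2), hence $I + A^{-1}$ is a P-matrix by Theorem \ref{t111}(1), and therefore its inverse $C = (I+A^{-1})^{-1}$ is a P-matrix, again by Theorem \ref{t111}(2). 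Combining, $A = (I+A)(I+A^{-1})^{-1}$ exhibits $A$ as a product of two P-matrices.

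Finally, to confirm the factorization is genuinely non-trivial I would check that neither factor equals $I$ or $A$: $I+A = I$ would force $A = 0$, which is not a P-matrix, and $I+A = A$ would force $I = 0$; the same conclusions follow for $(I+A^{-1})^{-1}$ after passing to inverses. So both factors are distinct from $A$ and from the identity. Everything in this argument beyond the identification $(I+A)^{-1}A = (I+A^{-1})^{-1}$ is a direct application of Theorem \ref{t111} and Proposition \ref{specp1}, so the proof is short once the correct pair of factors has been spotted.
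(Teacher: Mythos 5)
Your proof is correct and is essentially the paper's argument: your factorization $A=(I+A)(I+A^{-1})^{-1}$ is exactly the paper's $A=(I+U(A))^{-1}(I-U(A))$ once the factors of $2$ in $I+U(A)=2(I+A)^{-1}$ and $I-U(A)=2(I+A^{-1})^{-1}$ cancel, and both rest on the same applications of Theorem \ref{t111} and Proposition \ref{specp1}. If anything, your direct formulation is cleaner and sidesteps the typo in the paper's final line, where $A=(I-U(A))^{-1}(I-U(A))$ should read $A=(I+U(A))^{-1}(I-U(A))$.
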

\begin{proof}
    By Theorem \ref{spect1} (1), each real eigenvalue of $A$ is positive. Hence $A$ has no negative real eigenvalues, So $(I+A)^{-1}$ and $(I-A)$ exist. Therefore $U(A)=(I+A)^{-1}(I-A)$ is well defined. By Proposition \ref{specp1}, we have
    \begin{eqnarray*}
        I+U(A)&=&2(I+A)^{-1}\\
        I-U(A)&=&2(I+A^{-1})^{-1}.
    \end{eqnarray*}
    By Theorem \ref{t111}, $I+U(A)$ and $I-U(A)$ are P-matrices. Thus $A=(I-U(A))^{-1}(I-U(A))$ is a factorization of $A$.
\end{proof}
\begin{definition}
    A matrix $A$ is called positive stable if all its eigenvalues have positive real parts. It is easy to see that a P-matrix may not be positive stable.
\end{definition}
\begin{proposition}\label{sm1}
    Let $A\in \mathbb{R}^{n \times n}$ be a P-matrix and let $D \in \mathbb{R}^{n \times n} $  be a positive diagonal matrix. Then $AD$ is positive stable.
\end{proposition}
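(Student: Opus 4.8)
The plan is to split the argument into a routine reduction followed by the substantive spectral step. The routine part is to check that $AD$ is itself a P-matrix, and then the whole weight of the proposition falls on upgrading this P-property to positive stability, i.e.\ on forcing every eigenvalue of $AD$ into the open right half-plane. For the reduction I would work directly with principal minors: since $D=\operatorname{diag}(d_1,\dots,d_n)$ is diagonal, for every index set $S\subseteq\langle n\rangle$ the corresponding principal submatrices satisfy $(AD)[S]=A[S]\,D[S]$, so that
$$\det (AD)[S]=\det A[S]\cdot\prod_{j\in S}d_j .$$
Because $A$ is a P-matrix and each $d_j>0$, every such minor is strictly positive, and hence $AD$ is a P-matrix.

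Next I would dispose of the real eigenvalues before confronting the complex ones. If $\mu\in\mathbb R$ is an eigenvalue of $AD$ with real eigenvector $x\neq 0$, the sign non-reversal property of P-matrices (Fiedler--Pt\'ak) yields an index $j$ with $x_j(ADx)_j>0$, that is $\mu\,x_j^2>0$, so $\mu>0$; thus all real eigenvalues of $AD$ are automatically positive. It remains to handle the non-real conjugate pairs. Writing $\sigma(AD)=\{\mu_1,\dots,\mu_n\}$, Theorem~\ref{spect-1} gives $\sigma_k(\mu_1,\dots,\mu_n)>0$ for $k=1,\dots,n$; equivalently the polynomial
$$q(t)=\prod_{i=1}^{n}(t+\mu_i)=t^{n}+\sigma_1 t^{n-1}+\cdots+\sigma_{n-1}t+\sigma_n$$
has strictly positive coefficients, and positive stability of $AD$ is precisely the assertion that $q$ is Hurwitz, its roots $-\mu_i$ all lying in the left half-plane. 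My plan is therefore to derive the full Routh--Hurwitz conditions for $q$ from the P-structure of $AD$.

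I expect this final step to be the main obstacle. When $n\le 2$ the positivity of all coefficients of $q$ already forces it to be Hurwitz, and the proposition is immediate; but for $n\ge 3$ the inequalities $\sigma_k>0$ are merely the first and weakest of the Routh--Hurwitz conditions, and the remaining determinantal inequalities have to be produced by hand. The information at my disposal from the P-property alone --- the positivity of the $\sigma_k$ together with Kellogg's sector bound (\ref{bound-1}), $|\arg \mu_i|<\tfrac{n-1}{n}\pi$ --- only localizes the spectrum in a wedge strictly larger than the right half-plane. Closing the gap will require exploiting the P-matrix data more fully, for instance the positivity of \emph{all} principal minors of $AD$ (equivalently the positivity of the elementary symmetric functions of each of its principal submatrices, every one of which is again a P-matrix), and feeding these into the higher Routh--Hurwitz determinants; carrying out this extraction is where I anticipate the real work of the proof to lie.
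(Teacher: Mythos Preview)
Your reduction showing that $AD$ is again a P-matrix, via $(AD)[S]=A[S]\,D[S]$ and $\det(AD)[S]=\det A[S]\cdot\prod_{j\in S}d_j>0$, is correct and more explicit than anything in the paper; your treatment of the real eigenvalues through sign non-reversal is likewise fine. The obstacle you flag in the final step, however, is not just ``the real work'' --- it is insurmountable, because the proposition as stated is false. Take $D=I$: then $AD=A$, and the paper itself remarks in the definition immediately preceding this proposition that a P-matrix need not be positive stable. Concretely, by Theorem~\ref{spect-1} there is a $3\times 3$ P-matrix whose spectrum has $\sigma_1=\sigma_2=1$ and $\sigma_3=2$ (all positive), i.e.\ characteristic polynomial $t^3-t^2+t-2$; but the Routh--Hurwitz inequality $\sigma_1\sigma_2>\sigma_3$ fails, and indeed this cubic has one real root near $1.35$ and a complex conjugate pair with real part near $-0.175$. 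So your suspicion that the positivity of the $\sigma_k$ cannot be upgraded to the full Hurwitz conditions is exactly right, and no further exploitation of principal-minor data will close the gap, since the conclusion is false for general positive diagonal $D$.

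For comparison, the paper's proof consists of two sentences: from ``$A$ has no negative real eigenvalues'' and ``$D$ is positive diagonal'' it jumps directly to ``every eigenvalue of $AD$ has positive real parts''. This is precisely the step you identified as problematic, and the paper offers no argument for it; in effect it asserts the conclusion for the non-real eigenvalues without justification. Your proposal is therefore more honest than the paper's own proof: you located the genuine difficulty, whereas the paper simply elides it.
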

\begin{proof}
    Since $A$ is a P-matrix, it has no negative real eigenvalues. As $D$ is a diagonal matrix with positive diagonal entries, every eigenvalue of $AD$ has positive real parts.
\end{proof}
The following result describes the factorization of a P-matrix into a product of positive stable matrices which are P-matrices as well. 
\begin{theorem}
    Let $A\in \mathbb{R}^{n \times n}$. If $A$ is a P-matrix, then there exists positive diagonal matrices $S$ and $T$ such that $S^{-1}AT$ is a product of positive stable P-matrices.
\end{theorem}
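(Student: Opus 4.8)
The plan is to reuse the factorization theorem above (a P-matrix is a product of P-matrices) and then correct its two factors by diagonal scalings so that each becomes positive stable. Since $A$ is a P-matrix it has no non-positive real eigenvalue: if $\lambda\le 0$ were one, then $A-\lambda I = A+(-\lambda)I$ would be a singular P-matrix by Theorem \ref{t111}(1), which is impossible. Hence $I+A$ is invertible, and since $\det A>0$ the inverse $A^{-1}$ exists, so $U(A)=(I+A)^{-1}(I-A)$ is well defined. Combining the factorization theorem with Proposition \ref{specp1} and Theorem \ref{t111}, we may write $A=BC$, where $B=(I+U(A))^{-1}=\tfrac{1}{2}(I+A)$ and $C=I-U(A)=2(I+A^{-1})^{-1}$ are both P-matrices.

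Next I would record the elementary permanence fact that if $P$ is a P-matrix and $D$ is a positive diagonal matrix, then $DP$ and $PD$ are again P-matrices, since for every $\alpha\subseteq\langle n\rangle$ one has $(DP)_{\alpha\alpha}=D_{\alpha\alpha}P_{\alpha\alpha}$ and therefore $\det(DP)_{\alpha\alpha}=\det D_{\alpha\alpha}\,\det P_{\alpha\alpha}>0$. Now, for positive diagonal matrices $S$ and $T$, write
\[
S^{-1}AT \;=\; S^{-1}BC\,T \;=\; \bigl(S^{-1}B\bigr)\bigl(CT\bigr).
\]
By the permanence fact, both $S^{-1}B$ and $CT$ are P-matrices. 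Positive stability of $CT$ follows by applying Proposition \ref{sm1} to the P-matrix $C$. For $S^{-1}B$ I use that $S^{-1}B$ and $BS^{-1}$ share the same spectrum (as $XY$ and $YX$ always do), and $BS^{-1}$ is positive stable by Proposition \ref{sm1} applied to the P-matrix $B$ with positive diagonal $S^{-1}$. Thus $S^{-1}AT=(S^{-1}B)(CT)$ is exhibited as a product of two positive stable P-matrices.

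The step that needs the most care is the left factor $S^{-1}B$, because Proposition \ref{sm1} is phrased for multiplication on the \emph{right} by a positive diagonal; the fix is the spectral identity $\sigma(S^{-1}B)=\sigma(BS^{-1})$, or, equivalently, applying Proposition \ref{sm1} to $B^{T}$ and using that the P-matrix property is preserved by transposition. One should also note that the scalings $S,T$ can be chosen freely here (for instance $S=T=I$), whereas if Proposition \ref{sm1} is read in the sharper ``there exists a positive diagonal'' form, then $T$ is chosen to stabilize $C$ and $S$ (via $B^{T}$) to stabilize $B$, so the diagonal matrices do genuine work. Apart from this, the proof reduces entirely to the already-established factorization $A=BC$ and the permanence of the P-matrix property under diagonal scaling, so I do not anticipate a serious obstacle.
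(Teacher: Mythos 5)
Your proposal is correct and takes essentially the same route as the paper: both rest on the factorization $A=(I+U(A))^{-1}(I-U(A))$ from Proposition \ref{specp1}, the P-matrix facts of Theorem \ref{t111}, and the stability statement of Proposition \ref{sm1}. The only difference is bookkeeping of the left factor: you keep it as $\tfrac12(I+A)=(I+U(A))^{-1}$ and handle the left diagonal scaling via $\sigma(S^{-1}B)=\sigma(BS^{-1})$, whereas the paper absorbs $S^{-1}$ into $[(I+U(A))S]^{-1}$ (implicitly using that the inverse of a positive stable P-matrix is again a positive stable P-matrix), so if anything your version spells out a step the paper leaves tacit.
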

\begin{proof}
    By Theorem \ref{t111}, $I+U(A)$ and $I-U(A)$ are P-matrices. Thus by Proposition \ref{sm1}, $(I+U(A))S$ and $(I-U(A))T$ are  positive stable matrices. Thus,\\
    \begin{eqnarray*}
        S^{-1}AT&=&S^{-1}(I+U(A))^{-1}(I-U(A))T\\
        &=&[(I+U(A))S]^{-1}(I-U(A))T.
    \end{eqnarray*}
    This completes the proof.
\end{proof}

\section{Spectral results for certain positive operators}

We start this section by recalling definition of different spectrum of operators. We denote $\mathcal{B}(\mathcal{H})$ the collection of all bounded linear operators on Hilbert space $\mathcal{H}$.

\begin{definition}(\cite{MR1427262})
    Let $\mathcal{H}$ be a Hilbert space and $T \in \mathcal{B}(\mathcal{H})$. A scalar $k$ is called an eigenvalue of $T$ if there is a non-zero $x \in \mathcal{H}$ such that $T(x)=kx$. In this case, $x$ is called an eigenvector of $T$ corresponding to $k$.
    \par{} The set of all eigenvalue of $T$ is called the eigenspectrum of $T$. We denote eigenspectrum of $T$ by $\sigma_e(T)$.  Note that if $x$ is an eigenvector corresponding to an eigenvalue $k$ of $T$, then same hold for the unit vector $\frac{x}{||x||}$. Hence
    $$\sigma_e(T)=\{k \in \mathbb{K}: T(x)=kx \text{ for some } x \in \mathcal{H} \text{ with } ||x||=1 \}.$$
    \par{} Note that a scalar $k$ is an eigenvalue of $T$ if and only if the operator $T-kI$ is not injective. In that case, the closed subspace $ker(T-kI)$ is a non-zero subspace. It is called the eigenspace of $T$ corresponding to the eigenvalue $k$.
    \par{} A scalar $k$ is called an approximate eigenvalue of $T$ if the operator $T-kI$ is not bounded below, that is, for every $\beta > 0$, there is some $x\in \mathcal{H}$ with $||x||=1$ and $||T(x)-kx||<\beta$. The set of all approximate eigenvalues of $T$ constitutes the approximate eigenspectrum of $T$, we denote it by $\sigma_a(T)$. It is then clear that
    $$\sigma_a(T)=\{k \in \mathbb{K}:(T-kI)x_n  \longrightarrow 0 \text{ for some } (x_n) \in \mathcal{H} \text{ with } ||x_n||=1 \, \, \, \forall n \}$$
    \par{} A scalar $k$ is called spectral value of $T$ if the bounded operator $T-kI$ is not invertible in $\mathcal{B}(\mathcal{H})$. The set of all spectral values of $T$ is called the spectrum of $T$. We denote it by $\sigma(T)$.
    $$\sigma(T)=\{k \in \mathbb{K}:T-kI   \text{ is either not injective or not surjective }\}.$$
\end{definition}

\begin{theorem}\label{t1}(\cite{s1})
    (Spectral theorem for compact operators). Let $\mathcal{H}$ be an infinite dimen- sional Hilbert space, and let $T \in K(\mathcal{H})$ be a compact operator.
    \begin{enumerate}
        \item Except for the possible value $0$, the spectrum of $T$ is entirely point spectrum; in other words
$\sigma(T) - \{0\} = \sigma_p(T) - \{0\}$.
\item We have $0 \in \sigma(T)$, and $0 \in \sigma_p(T)$ if and only if $T$ is not injective.
\item The point spectrum outside of 0 is countable and has finite multiplicity: for each $\lambda \in  \sigma_p(T) - \{0\}$, we have
$dim(\lambda I- T)<+\infty$.
\item Assume $T$ is normal. Let $\mathcal{H}_0=ker(T)$, and $\mathcal{H}_1=ker(T)^\perp$ . Then $T$ maps $\mathcal{H}_0$ to $\mathcal{H}_0$ and $\mathcal{H}_1$ to $\mathcal{H}_1$; on $\mathcal{H}_1$, which is separable, there exists an orthonormal basis $(e_1,\ldots,e_n,\ldots)$ and $\lambda_n \in  \sigma_p(T) - \{0\}$ such that
$$lim_{n \rightarrow 0} \lambda_n=\{0\}$$
and
$T(e_n)=\lambda_ne_n$ for all $n>1$.

in  particular,  if $(f_i)_{i\in I}$ is  an  arbitrary  orthonormal  basis  of $\mathcal{H}_0$,  which  may  not  be separable, we have $$T(\sum_{i\in I} \alpha_if_i+\sum_{n\geq 1} \alpha_n e_n )=\sum_{n\geq 1} \lambda_n \alpha_n e_n$$ for  all  scalars $\alpha_n, \, \alpha_i \in \mathbb{C}$ for  which  the  vector  on  the  left-hand  side  lies  in $\mathcal{H}$,  and  the series on the right converges in $\mathcal{H}$.  This can be expressed also as 
\begin{equation}\label{speq1}
    T(v) =\sum_{n \geq 1}\lambda_n\langle v,e_n \rangle e_n.
\end{equation}

    \end{enumerate}
\end{theorem}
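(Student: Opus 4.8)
\emph{Proof proposal.} This is the Riesz--Schauder spectral theorem, and the plan is to assemble it from the Riesz theory of compact operators and, for part (4), a variational/exhaustion argument. For parts (1) and (3) I would fix $\lambda\neq 0$, write $\lambda I-T=-\lambda(I-\lambda^{-1}T)$, and note that $\lambda^{-1}T$ is again compact. Invoking the Fredholm alternative for operators $I-K$ with $K$ compact — finite-dimensional kernel, closed range, and injectivity equivalent to surjectivity equivalent to invertibility — I get that if $\lambda\notin\sigma_p(T)$ then $\lambda I-T$ is injective, hence invertible with bounded inverse by the open mapping theorem, so $\lambda\notin\sigma(T)$; the reverse inclusion is trivial, giving (1). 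The identity $\dim\ker(\lambda I-T)=\dim\ker(I-\lambda^{-1}T)<+\infty$ gives the finite-multiplicity statement in (3).

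For countability of $\sigma_p(T)-\{0\}$ and for (2): I would show that for every $\varepsilon>0$ only finitely many eigenvalues have modulus $\geq\varepsilon$. Assuming not, choose distinct eigenvalues $\lambda_1,\lambda_2,\dots$ with $|\lambda_n|\geq\varepsilon$ and eigenvectors $x_n$; these are linearly independent, so by the Riesz lemma I pick unit vectors $y_n\in\mathrm{span}(x_1,\dots,x_n)$ at distance $\geq 1/2$ from $\mathrm{span}(x_1,\dots,x_{n-1})$. Since $(T-\lambda_nI)y_n$ and $Ty_m$ for $m<n$ all lie in $\mathrm{span}(x_1,\dots,x_{n-1})$, one gets $\|Ty_n-Ty_m\|\geq|\lambda_n|/2\geq\varepsilon/2$, so $(Ty_n)$ has no convergent subsequence, contradicting compactness; taking $\varepsilon=1/k$ yields countability. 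For (2), if $0\notin\sigma(T)$ then $T$ is invertible and $I=T^{-1}T$ is compact, forcing the closed unit ball of $\mathcal{H}$ to be compact, which is impossible since $\dim\mathcal{H}=\infty$; and $0\in\sigma_p(T)$ iff $\ker T\neq\{0\}$ by definition.

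For (4), assume $T$ normal. From $\|Tx\|=\|T^*x\|$ I get $\ker T=\ker T^*$, so $\mathcal{H}_1=(\ker T)^\perp=(\ker T^*)^\perp$ is $T$-invariant: for $x\in\mathcal{H}_1$ and $y\in\ker T^*$, $\langle Tx,y\rangle=\langle x,T^*y\rangle=0$. The restriction $S=T|_{\mathcal{H}_1}$ is compact, normal, and injective. The key step is the existence of an eigenvalue for a nonzero compact normal operator: for a nonzero compact self-adjoint $B$ one has $\|B\|=\sup_{\|x\|=1}|\langle Bx,x\rangle|$, and a maximizing sequence produces, via compactness, an eigenvector for $\pm\|B\|$; the normal case follows by passing to the self-adjoint real and imaginary parts, or by using that the spectral radius of a normal operator equals its norm together with the fact that a nonzero approximate eigenvalue of a compact operator is a genuine eigenvalue. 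A Zorn's-lemma argument then yields a maximal orthonormal system $(e_n)$ of eigenvectors of $S$ with nonzero eigenvalues; $\mathcal{H}_2:=\overline{\mathrm{span}}\,(e_n)$ and its orthogonal complement inside $\mathcal{H}_1$ are both $S$-invariant by normality, and on that complement $S$ is compact normal with no nonzero eigenvalue, hence zero by the existence step — but $S$ is injective, so the complement is $\{0\}$ and $\mathcal{H}_1=\mathcal{H}_2$, which is separable. Countability of the index set and $\lambda_n\to 0$ follow from the estimate of the previous paragraph plus finite multiplicity. Finally, writing $v=v_0+\sum_n\langle v,e_n\rangle e_n$ with $v_0\in\ker T$ and applying the bounded operator $T$ termwise gives $T(v)=\sum_{n\geq 1}\lambda_n\langle v,e_n\rangle e_n$, the series converging because $|\lambda_n\langle v,e_n\rangle|\leq\|T\|\,|\langle v,e_n\rangle|$ and $\sum_n|\langle v,e_n\rangle|^2<\infty$ by Bessel.

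\textbf{Main obstacle.} The genuinely spectral input is the existence of an eigenvalue for a nonzero compact normal operator; everything in (4) is bootstrapped from it by the exhaustion argument, while the Riesz theory, the Riesz-lemma compactness trick, and the $I=T^{-1}T$ observation are comparatively routine. The secondary point requiring care is the bookkeeping in the exhaustion step, namely verifying that the orthogonal complement of $\mathcal{H}_2$ within $\mathcal{H}_1$ is $S$-invariant — this is exactly where normality (via $\ker S=\ker S^*$ and invariance of orthogonal complements of reducing subspaces) is indispensable and cannot be omitted.
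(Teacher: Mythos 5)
This theorem is quoted in the paper from the cited reference without any proof of its own, so there is nothing in-paper to compare against; your outline is the standard Riesz--Schauder argument (Fredholm alternative for $\lambda\neq 0$, the Riesz-lemma trick for countability and $\lambda_n\to 0$, the compactness argument excluding $0\notin\sigma(T)$, and eigenvalue existence plus exhaustion for the normal case), which is essentially the same route as the cited source. The sketch is correct, including the point you flag about invariance of the complement of $\overline{\mathrm{span}}\,(e_n)$ under the normal restriction.
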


\begin{proposition}\label{p1}
    Let $T\in \mathcal{B}(\mathcal{H})$ be a P-operator relative to an orthonormal basis $\mathcal{B}=\{e_i\}_{i=1}^\infty$. Then any real eigen value of $T$ is positive.
\end{proposition}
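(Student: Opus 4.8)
The plan is to unwind the definition of a P-operator relative to the orthonormal basis $\mathcal{B}=\{e_i\}_{i=1}^{\infty}$ and test it on an eigenvector. Following the finite-dimensional sign non-reversal characterization of Fiedler and Pták transported to this setting, the statement ``$T$ is a P-operator relative to $\mathcal{B}$'' means that $T$ reverses the sign of no nonzero vector; i.e., for every $x\neq 0$ in $\mathcal{H}$ there is an index $j$ for which the $j$-th coordinate product $\langle x,e_j\rangle\,\overline{\langle Tx,e_j\rangle}$ has positive real part (in the real case this is simply $\langle x,e_j\rangle\langle Tx,e_j\rangle>0$). Here $x_i:=\langle x,e_i\rangle$ and $(Tx)_i:=\langle Tx,e_i\rangle$ play the role of the $i$-th components of $x$ and $Tx$.

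First I would fix a real eigenvalue $\lambda$ of $T$ with a corresponding eigenvector $x\neq 0$, so $Tx=\lambda x$. Taking coordinates in $\mathcal{B}$, the eigen-equation gives $(Tx)_i=\lambda\,x_i$ for every $i$, and since $\lambda\in\mathbb{R}$ we get
$$\operatorname{Re}\!\big(x_i\,\overline{(Tx)_i}\big)=\operatorname{Re}\!\big(\lambda\,|x_i|^{2}\big)=\lambda\,|x_i|^{2}\qquad\text{for every }i.$$
Next I would argue by contradiction. Suppose $\lambda\le 0$; then $\lambda|x_i|^{2}\le 0$ for all $i$, that is, $\operatorname{Re}\!\big(x_i\,\overline{(Tx)_i}\big)\le 0$ for every index $i$, which is precisely the assertion that $T$ reverses the sign of the nonzero vector $x$. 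This contradicts the P-operator hypothesis, so we must have $\lambda>0$.

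I do not anticipate a genuine obstacle: the argument is essentially the defining property evaluated on an eigenvector, paralleling the matrix case in Theorem~\ref{spect1}. The only points needing a little care are (i) correctly transporting the coordinatewise sign condition to the Hilbert-space setting, so that the relevant quantity on an eigenvector collapses to $\lambda|x_i|^{2}$, and (ii) observing that the case $\lambda=0$ is excluded by the same reasoning (a nonzero vector in $\ker T$ would be ``sign-reversed''), which is what makes the conclusion strict positivity rather than mere nonnegativity.
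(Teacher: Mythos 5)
Your proof is correct and is essentially the paper's argument: both simply evaluate the sign non-reversal property defining a P-operator on an eigenvector, so that the coordinate products collapse to $\lambda$ times a nonnegative square. The only cosmetic difference is that you argue by contradiction (assuming $\lambda\le 0$ makes $x$ sign-reversed) while the paper argues directly (some index $j$ gives $\lambda\langle x,e_j\rangle^{2}>0$, hence $\lambda>0$), and you are slightly more careful about complex coordinates via $\operatorname{Re}$ and $|x_i|^{2}$.
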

\begin{proof}
    Let $\lambda \in \sigma_e(T)$. Then there exist $x \neq 0$ such that $T(x)=\lambda x$. As $T\in \mathcal{B}(\mathcal{H})$ be a P-operator relative to the orthonormal basis $\mathcal{B}$, there exists some index $j$ for which,
    \begin{eqnarray*}
       &\langle x, e_j \rangle\langle T x, e_j \rangle > 0,\\
       \implies &\langle x, e_j \rangle\langle \lambda x, e_j \rangle > 0,\\
        \implies &\langle x, e_j \rangle \lambda \langle  x, e_j \rangle  > 0,\\
       \implies &\lambda \langle x, e_j \rangle^2  > 0,\\
       \implies &\lambda > 0.
    \end{eqnarray*}
\end{proof}

\begin{proposition}\label{p2}
    Let $T\in \mathcal{B}(\mathcal{H})$ be a P-operator relative to an orthonormal basis $\mathcal{B}=\{e_i\}_{i=1}^\infty$. Then any real element in $\sigma_a(T)$ is positive.
\end{proposition}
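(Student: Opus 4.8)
The plan is to run the proof of Proposition~\ref{p1} with a true eigenvector replaced by an approximating sequence for the approximate eigenvalue, and then pass to the limit.

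First I would unwind the definition of $\sigma_a(T)$: for a real $\lambda\in\sigma_a(T)$, choose unit vectors $x_n$ with $\varepsilon_n:=\|Tx_n-\lambda x_n\|\to 0$. Since each $x_n\neq 0$, the P-operator property relative to $\mathcal B$ supplies an index $j_n$ with $\langle x_n,e_{j_n}\rangle\langle Tx_n,e_{j_n}\rangle>0$; write $a_n:=\langle x_n,e_{j_n}\rangle$, so $a_n\neq 0$ and $|a_n|\le 1$. Then I would substitute $\langle Tx_n,e_{j_n}\rangle=\langle Tx_n-\lambda x_n,e_{j_n}\rangle+\lambda a_n$, multiply by $a_n$, and use the Cauchy--Schwarz bound $|a_n\langle Tx_n-\lambda x_n,e_{j_n}\rangle|\le|a_n|\varepsilon_n\le\varepsilon_n$, to arrive at
\[
0<\langle x_n,e_{j_n}\rangle\langle Tx_n,e_{j_n}\rangle\le\lambda\,a_n^{2}+\varepsilon_n\qquad(n\ge 1).
\]
In particular $\lambda\,a_n^{2}>-\varepsilon_n$ for all $n$, so if $\lambda<0$ then $a_n^{2}<\varepsilon_n/|\lambda|\to 0$, while if $\lambda=0$ the strictly positive quantity $\langle x_n,e_{j_n}\rangle\langle Tx_n,e_{j_n}\rangle$ gets trapped below $\varepsilon_n\to 0$. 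To turn either alternative into an outright contradiction --- and hence force $\lambda>0$ --- I would invoke a uniform form of the sign non-reversal property, namely the existence of $\delta>0$ with $\sup_j\langle x,e_j\rangle\langle Tx,e_j\rangle\ge\delta$ for every unit vector $x$, which I expect to be encoded in the P-operator notion of \cite{rashid} (it holds automatically in finite dimensions by compactness of the unit sphere): it gives $\delta\le\langle x_n,e_{j_n}\rangle\langle Tx_n,e_{j_n}\rangle\le\lambda\,a_n^{2}+\varepsilon_n\le\varepsilon_n$ once $\lambda\le 0$, which is impossible for large $n$.

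The crux --- and the one genuine difference from Proposition~\ref{p1} --- is keeping $a_n=\langle x_n,e_{j_n}\rangle$ bounded away from $0$: for a bona fide eigenvector the witnessing component is automatically nonzero and one divides by its square for free, whereas along an approximating sequence the mass may spread over more and more basis coordinates and drive $a_n\to 0$. The clean remedy is the uniform sign non-reversal bound above; failing that (with only the pointwise P-property available), one would pass to a subsequence on which $j_n$ is constant and argue with a weak limit of $(x_n)$, which is more delicate and likely needs an extra hypothesis such as compactness of $T$ or $0\notin\sigma_a(T)$.
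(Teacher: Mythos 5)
Your reduction to the inequality $0<\lambda a_n^{2}+\varepsilon_n$ is fine, and you have correctly isolated the real difficulty: the witnessing components $a_n=\langle x_n,e_{j_n}\rangle$ may tend to $0$. But the device you use to close the argument --- a uniform sign non-reversal bound $\sup_j\langle x,e_j\rangle\langle Tx,e_j\rangle\ge\delta>0$ over unit vectors $x$ --- is not part of the P-operator definition in \cite{rashid} (which only asks, for each nonzero $x$, for \emph{some} index with a strictly positive product), and it is genuinely false in infinite dimensions: the diagonal operator $Te_i=\tfrac1i e_i$ is a P-operator relative to $\mathcal B$, yet $\sup_j\langle e_n,e_j\rangle\langle Te_n,e_j\rangle=\tfrac1n\to0$. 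So the key step of your proof is an added hypothesis rather than a consequence of the assumptions, and your fallback (a subsequence with constant $j_n$ plus a weak limit) does not repair it, since the weak limit may vanish and strict inequalities do not survive the limit. The gap is therefore genuine.

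The same diagonal operator shows the obstruction is not an artifact of your approach: for it, $0\in\sigma_a(T)$ because $\|Te_n\|=\tfrac1n\to0$, so no argument can yield strict positivity of every real approximate eigenvalue; at best one can hope to exclude $\lambda<0$, and even that does not follow from the pointwise P-property via your estimate alone, precisely because of the possibility $a_n\to0$. For comparison, the paper's proof takes the same first steps (the sequence $x_n$ and indices $j_n$) but then assumes the $x_n$ converge to some $x$ and that the strict inequality $\langle x_n,e_{j_n}\rangle\langle Tx_n,e_{j_n}\rangle>0$ passes to the limit in the form $\lambda\langle x,e_{j_n}\rangle^{2}>0$ --- exactly the points your analysis flags as problematic. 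So your diagnosis of where the difficulty lies is more careful than the argument given in the paper, but as written your proposal does not establish the proposition.
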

\begin{proof}
    Let $\lambda \in \sigma_a(T)$ be real. Then there exist a sequence $\{x_n\} \in \mathcal{H}$ with $||x_n||=1$ and $Tx_n -\lambda x_n \longrightarrow 0 $ as $n \longrightarrow \infty$, that is, $(T -\lambda I) x_n \longrightarrow 0 $ as $n \longrightarrow \infty$, This implies that $Tx_n \longrightarrow \lambda x $ as $n \longrightarrow \infty$, where $x_n \longrightarrow x$ as $n \longrightarrow \infty$. As $T$ is a P-operator relative to the orthonormal basis $\mathcal{B}$ and $||x_n||=1$, there exist $j_n$ such that $\langle x_n, e_{j_n} \rangle$$\langle Tx_n, e_{j_n} \rangle >0 $, that is $\langle x, e_{j_n} \rangle \lambda \langle x, e_{j_n} \rangle >0 $ an $n \longrightarrow \infty$. Thus $\lambda \langle x, e_{j_n} \rangle ^2 > 0$, hence $\lambda > 0$.
\end{proof}

\begin{theorem}
     Let $T\in \mathcal{B}(\mathcal{H})$ be a normal operator and let $\mathcal{H}_0= ker(T)$ and $\mathcal{H}_1=ker(T)^{\perp}$. If $T'$, the restriction of $T$ to $\mathcal{H}_1$, is a P-operator relative to an orthonormal basis $\mathcal{B}'=\{e_i\}_{i=1}^\infty$, where $\mathcal{B}'$ is made of eigenvectors of $T$ with $T(e_i)=\lambda_i e_i$, Then there exists a positive operator $R \in K(\mathcal{H})$ such that $R^2 = T$, which is
    denoted $\sqrt{T'}$ or $(T')^{1/2}$. It is unique among positive bounded operators.
\end{theorem}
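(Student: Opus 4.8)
The plan is to build $R$ directly from the spectral decomposition of $T$ furnished by Theorem~\ref{t1}, once the $P$-operator hypothesis has been used to ensure that every eigenvalue $\lambda_i$ is a \emph{positive real} number (so that $\sqrt{\lambda_i}$ is unambiguous). I will use that $T$ is compact as well as normal: this is what makes Theorem~\ref{t1}(4) applicable, and it is in any case implicit in asking for $R\in K(\mathcal H)$ with $R^2=T$.

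The first step is to pin down the eigenvalues. Fix an index $k$ and apply the sign non-reversal property of the $P$-operator $T'$ to the unit vector $e_k\in\mathcal H_1$: there is an index $j$ with $\langle e_k,e_j\rangle\,\langle Te_k,e_j\rangle>0$. Since $\langle e_k,e_j\rangle=\delta_{kj}$ and $\langle Te_k,e_j\rangle=\lambda_k\delta_{kj}$, the product vanishes unless $j=k$, in which case it equals $\lambda_k$; hence $\lambda_k>0$, and in particular $\lambda_k$ is a positive real (this is Proposition~\ref{p1} read off on the basis vectors). Next, because $T$ annihilates $\mathcal H_0=\ker T$ while $\langle v,e_n\rangle=0$ for $v\in\mathcal H_0$, Theorem~\ref{t1}(4) gives the representation $T(v)=\sum_{n\geq 1}\lambda_n\langle v,e_n\rangle e_n$ for every $v\in\mathcal H$, with $\lambda_n\to 0$.

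I would then set $R(v)=\sum_{n\geq 1}\sqrt{\lambda_n}\,\langle v,e_n\rangle\, e_n$ for $v\in\mathcal H$ and check the required properties in order. Boundedness: $\|Rv\|^2=\sum_n\lambda_n|\langle v,e_n\rangle|^2\leq(\sup_n\lambda_n)\|v\|^2$, so $R\in\mathcal B(\mathcal H)$. Compactness: since $\sqrt{\lambda_n}\to 0$, the finite-rank truncations $R_N(v)=\sum_{n=1}^N\sqrt{\lambda_n}\langle v,e_n\rangle e_n$ obey $\|R-R_N\|=\sup_{n>N}\sqrt{\lambda_n}\to 0$, so $R\in K(\mathcal H)$. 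Self-adjointness and positivity: $\langle Rv,w\rangle=\sum_n\sqrt{\lambda_n}\langle v,e_n\rangle\overline{\langle w,e_n\rangle}=\langle v,Rw\rangle$ and $\langle Rv,v\rangle=\sum_n\sqrt{\lambda_n}|\langle v,e_n\rangle|^2\geq 0$. Finally $R^2(v)=\sum_n(\sqrt{\lambda_n})^2\langle v,e_n\rangle e_n=\sum_n\lambda_n\langle v,e_n\rangle e_n=T(v)$, so $R^2=T$; the restriction of $R$ to $\mathcal H_1$ is the asserted $\sqrt{T'}$, and $R$ vanishes on $\mathcal H_0$.

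For uniqueness, suppose $S\in\mathcal B(\mathcal H)$ is positive with $S^2=T$. Then $ST=S^3=TS$, so $S$ leaves each eigenspace $\ker(T-\lambda_n I)$ invariant; on such an eigenspace $S$ is a positive operator with $S^2=\lambda_n I$, which by the spectral mapping theorem forces $\sigma(S)=\{\sqrt{\lambda_n}\}$ and hence $S=\sqrt{\lambda_n}I$ there, while on $\ker T$ the identity $\|Sx\|^2=\langle S^2x,x\rangle=\langle Tx,x\rangle=0$ gives $S=0$; since these subspaces span a dense subspace of $\mathcal H$, $S=R$. The norm estimates are routine; the genuinely delicate point is the very first step — extracting \emph{positive real} eigenvalues (not merely eigenvalues with positive real part) from the $P$-operator hypothesis — together with the tacit compactness of $T$, without which neither $\lambda_n\to 0$ nor $R\in K(\mathcal H)$ would be available.
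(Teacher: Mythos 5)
Your proposal is correct and follows essentially the same route as the paper: positivity of each $\lambda_i$ via the sign non-reversal property applied to the basis vectors, the diagonal construction $R(v)=\sum_{n\geq 1}\sqrt{\lambda_n}\langle v,e_n\rangle e_n$ with compactness from $\lambda_n\to 0$, and uniqueness by commutation with $T$ and identification of the square root on each eigenspace and on $\ker T$. You also rightly flag the tacit compactness of $T$ (needed to invoke Theorem~\ref{t1}), which the paper uses without stating, and your verification of positivity, self-adjointness, and the eigenspace step in the uniqueness argument is somewhat more complete than the paper's.
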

\begin{proof}
    We have $T(e_i)=\lambda_i e_i$, where $\lambda_i \in \sigma_e(T)-\{0\}$. As $T'$ is a P-operator relative to $\mathcal{B}'$, we have
      \begin{eqnarray*}
        \lambda_i &= &\lambda_i \langle e_i, e_i \rangle,\\
       & = &  \langle \lambda_i e_i, e_i \rangle,\\
       & = &  \langle T e_i, e_i \rangle,\\
       & = &  \langle  e_i, e_i \rangle \langle T e_i, e_i \rangle >0 \, \, \, \forall \, i.
    \end{eqnarray*}
    Now we define
    $$R(u_0+u_1)=\sum_{i \geq 1} \sqrt{\lambda_i} \alpha_i e_i.$$
    For any $u_0 \in \mathcal{H}_0$ and $u_1=\sum_{i \geq 1} \alpha_i e_i \in \mathcal{H}_1$. Then we can see that $R$ is a diagonal operator with coefficients $\sqrt{\lambda_i}$. Now by Theorem \ref{t1} we have $\sqrt{\lambda_i} \longrightarrow 0$ as $i \longrightarrow \infty$. This shows that $R$ is well defined compact operator. Thus for every $u\in \mathcal{H}_1$, we have

         \begin{eqnarray*}
        R^2(u) &= &R(\sum_{i \geq 1} \sqrt{\lambda_i} \alpha_i e_i),\\
       & = &  \sum_{i \geq 1} \lambda_i \alpha_i e_i,\\
       & = &  T'(u) \,\,\,\,\,  \forall u \in \mathcal{H}_1.\\
    \end{eqnarray*}
    Therefor $R^2=T'$.
    We next show the uniqueness. Let $R \in B(\mathcal{H})$ be such that $R^2=T'$. Then $RT'=RR^2=R^2R=T'R$. That is $R$ and $T'$ commute. It follows that any non-zero eigenvalue $\lambda_i$ of $T'$, $R$ induces the operator
    $$R_i: ker(T'-\lambda_iI)\longrightarrow ker(T'-\lambda_iI)$$
    given by $R_i=\sqrt{\lambda_i}I$ on the finite-dimensional $\lambda_i$ - eigenspace of $T'$, we can see that this $R_i$ are P-operators relative to the orthonormal basis of $ker(T'-\lambda_iI)$ and it satisfies $R_i^2=\lambda_ii I$. Also note that
     \begin{eqnarray*}
        \|R(u)\|^2 &= & \langle R(u), R(u) \rangle,\\
       & = &  \langle R^2(u), u \rangle,\\
       & = &  \langle T'(u), u \rangle.
    \end{eqnarray*}
    Thus \begin{eqnarray*}
        ker(R) &= & \{u \in \mathcal{H}: R(u)=0\},\\
       & = &  \{u \in \mathcal{H}: ||R(u)||^2=0\},\\
       & = &  \{u \in \mathcal{H}: \langle T'(u), u \rangle=0\}.
    \end{eqnarray*} 
Now by the expression \ref{speq1} of the Theorem \ref{t1}, we have    
      \begin{eqnarray*}
       T'(u) &= &\sum_{i \geq 1} \lambda_i \langle u, e_i \rangle e_i .
       \end{eqnarray*}
       Thus
       \begin{eqnarray*}
       & \langle T'(u), u \rangle &  =\langle \sum_{i \geq 1} \lambda_i \langle u, e_i \rangle e_i , u \rangle\\
       & & =\sum_{i \geq 1} \lambda_i |\langle u, e_i \rangle |^2 .
    \end{eqnarray*}
    In terms of orthonormal basis of eigenvectors $\{e_i\}_{i=1}^\infty$ and the positivity $\lambda_i >0 $ of eigenvalues, we have $\langle T'(u), u \rangle =0 $ if and only if $u$ is perpendicular to span of $\{e_i\}_{i=1}^\infty$, that is, by the construction $u \in ker(T')$. Thus the P-operator $R$ is uniquely determined on each eigenspace of $T'$ and $ker(T')$. By the spectral theorem, this implies that $R$ is unique.
\end{proof}

\noindent Next we define the concept of $k$th elementary symmetric function in infinite dimensional cases as follows:
\begin{definition}
    Let $\sigma_k(\lambda_1,\lambda_2,\lambda_3,\ldots)$ denote the $k$th symmetric function of the numbers $\lambda_1,\lambda_2,\lambda_3\ldots,$ given by
    $$\sigma_k(\lambda_1,\lambda_2,\lambda_3,\ldots)=\sum_{1 \leq i_1<i_2< \ldots < i_k }\lambda_{i_1}\cdot \lambda_{i_2}\cdot \ldots\cdot \lambda_{i_k}.$$
\end{definition}

\begin{theorem}
    The set $\{ \lambda_1,\lambda_2,\lambda_3,\ldots,\}$ is an eigen spectrum of some P-operator $T \in \mathcal{B}(\mathcal{H})$ relative to an orthonormal basis $\mathcal{B}=\{e_i\}_{i=1}^\infty $ if and only if \vspace{-0.2cm}
$$\sigma_k(\lambda_1,\lambda_2,\lambda_3,\ldots)>0, \,\,\, k=1,2,3, \ldots $$

\end{theorem}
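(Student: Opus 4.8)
The plan is to turn each implication into an application of the finite-dimensional characterisation (Theorem~\ref{spect-1}) together with the sign-non-reversal description of a P-operator. Throughout I read the hypothesis, as in the set-up of the preceding square-root theorem, so that $\mathcal B=\{e_i\}_{i=1}^{\infty}$ may be taken to be a basis of eigenvectors, $T e_i=\lambda_i e_i$; I also note at the outset that for the symbols $\sigma_k$ to be meaningful one needs $\sum_i|\lambda_i|<\infty$, so that in particular $\lambda_i\to 0$ and each $\sigma_k$ is an absolutely convergent series.

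For necessity, suppose $T$ is a P-operator relative to $\mathcal B$ with $\sigma_e(T)=\{\lambda_i\}$ and $T e_i=\lambda_i e_i$. Apply the defining property of a P-operator to the unit vector $x=e_i$: there is an index $j$ with $\langle e_i,e_j\rangle\langle T e_i,e_j\rangle>0$, and orthonormality forces $j=i$, giving $\lambda_i=\langle e_i,e_i\rangle\langle T e_i,e_i\rangle>0$ --- exactly the computation already used in the square-root theorem and in Proposition~\ref{p1}. Hence every $\lambda_i$ is a positive real, and each $\sigma_k$, being a sum of products of positive reals, is positive.

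For sufficiency, suppose $\sigma_k(\lambda_1,\lambda_2,\ldots)>0$ for every $k$. The first step is to recover $\lambda_i>0$ for all $i$. Form the entire function $f(t)=\prod_i(1+\lambda_i t)=\sum_{k\ge 0}\sigma_k t^{k}$; since $\sigma_0=1$ and every $\sigma_k>0$, we have $f(t)>0$ for all $t\ge 0$, whereas $f(-1/\lambda_i)=0$ whenever $\lambda_i\ne 0$. A negative $\lambda_i$ would place the zero $-1/\lambda_i$ on the positive axis, a contradiction; and $\lambda_i=0$ cannot occur as an eigenvalue of a P-operator (no nonzero $x$ with $Tx=0$ can satisfy sign non-reversal), so we may exclude it. Thus all $\lambda_i>0$. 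The second step is the construction: let $T$ be the diagonal operator $T e_i=\lambda_i e_i$, bounded because $\lambda_i\to 0$. For $x=\sum_i\alpha_i e_i\ne 0$ pick $j$ with $\alpha_j\ne 0$; then $\langle x,e_j\rangle\langle T x,e_j\rangle=\lambda_j|\alpha_j|^{2}>0$, so $T$ is a P-operator relative to $\mathcal B$. Finally, $T x=\mu x$ with $x\ne 0$ gives $\lambda_i\alpha_i=\mu\alpha_i$ for all $i$ with some $\alpha_i\ne 0$, so $\mu=\lambda_i$; hence $\sigma_e(T)=\{\lambda_i\}$.

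The step I expect to be the genuine obstacle is the recovery of positivity (and non-vanishing) of the $\lambda_i$ in the infinite setting: unlike the matrix case there is no characteristic polynomial to invoke, a single negative or zero value already breaks the statement, and one must be careful that the product-series identity for $f$ is legitimate, i.e.\ that $\sum_i|\lambda_i|<\infty$. A secondary routine point is boundedness of the constructed $T$, which is why $\lambda_i\to 0$ is recorded first. If one instead wanted to allow non-real conjugate pairs (in a complex Hilbert space), the diagonal operator would have to be replaced by a block-diagonal operator whose finite blocks are P-matrices supplied by Theorem~\ref{spect-1}, and then the hard part would shift to partitioning the sequence into finite P-sets, using the Hershkowitz--Keller result quoted earlier.
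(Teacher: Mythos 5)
Your proof is correct in outline but it is a genuinely different argument from the paper's, and in the sufficiency direction it is in fact closer to what the statement actually asserts. The paper's converse simply defines $T(x)=\sum_{i\ge 1}\alpha\,\sigma_i\langle x,e_i\rangle e_i$ with $\alpha>0$ small, i.e.\ a diagonal operator whose diagonal entries are the (positive) numbers $\alpha\sigma_i$; this is indeed a P-operator relative to $\mathcal B$, but its eigenspectrum is $\{\alpha\sigma_i\}$, not the prescribed set $\{\lambda_i\}$, and the paper never extracts positivity of the $\lambda_i$ from positivity of the $\sigma_k$. You instead recover $\lambda_i>0$ from the generating-function identity $\prod_i(1+\lambda_i t)=\sum_{k\ge 0}\sigma_k t^k$ (valid under your standing assumption $\sum_i|\lambda_i|<\infty$, which is anyway needed for the $\sigma_k$ to be finite) and then take the diagonal operator with diagonal $\lambda_i$; the sign-non-reversal check $\langle x,e_j\rangle\langle Tx,e_j\rangle=\lambda_j\alpha_j^2>0$ and the identification of its eigenvalues are correct, so your construction really does produce a P-operator with eigenspectrum $\{\lambda_i\}$. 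What your route buys is the missing step (positivity of the data) and an operator with the right spectrum; what it costs is the restriction to real, summable data, which you acknowledge, whereas the finite-dimensional Theorem~\ref{spect-1} allows non-real conjugate pairs.

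Two caveats. First, in the necessity direction you need not (and should not) assume that $\mathcal B$ consists of eigenvectors of $T$ with $Te_i=\lambda_i e_i$: the statement only says $\{\lambda_i\}=\sigma_e(T)$ for some P-operator relative to $\mathcal B$, and positivity of each $\lambda_i$ follows directly from Proposition~\ref{p1} applied to an arbitrary eigenvector, which is exactly how the paper argues; your extra hypothesis imports the setting of the square-root theorem and narrows the claim. Second, your exclusion of $\lambda_i=0$ is circular as written: in the ``if'' direction the $\lambda_i$ are just given numbers, and in the infinite setting positivity of every $\sigma_k$ does not force all $\lambda_i\neq 0$ (there is no top symmetric function playing the role of the determinant; e.g.\ $\{0,1,\tfrac12,\tfrac14,\ldots\}$ has all $\sigma_k>0$), while $0$ can never be an eigenvalue of a P-operator. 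So either the non-vanishing of the $\lambda_i$ must be added as a hypothesis or the stated equivalence fails for such sets; this is a defect of the theorem itself rather than of your construction, but ``so we may exclude it'' does not repair it.
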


\begin{proof}
    First Suppose that the set $\{ \lambda_1,\lambda_2,\lambda_3,\ldots,\}$ is an eigen spectrum of some P-operator relative to an orthonormal basis $\mathcal{B}=\{e_i\}_{i=1}^\infty $. As $\lambda_i \in \sigma_e(T)$, we have, for each $i$, $\lambda_i >0$ by Proposition \ref{p1}. As $\sigma_k(\lambda_1,\lambda_2,\lambda_3,\ldots)$ is the result of sums of the products of $\lambda_i$, we have $\sigma_k(\lambda_1,\lambda_2,\lambda_3,\ldots)>0$ for each $k$.
    \par{} Conversely, suppose that the set $\{ \lambda_1,\lambda_2,\lambda_3,\ldots,\}$ satisfies $\sigma_k(\lambda_1,\lambda_2,\lambda_3,\ldots)>0$ for each $k$. Let us define the operator $T \in \mathcal{B}(\mathcal{H})$ by
    $$T(x)=\sum_{i=1}^\infty \alpha \sigma_i \langle x, e_i \rangle e_i,$$ where $\alpha >0$ so small, so that $T$ is bounded and $\sigma_i$ denotes $\sigma_i=\sigma_i(\lambda_1,\lambda_2,\lambda_3,\ldots)$. Then $T$ is the required P-operator relative to the orthonormal basis $\mathcal{B}$.
\end{proof}

\begin{theorem}
    Let $T\in \mathcal{B}(\mathcal{H})$ be a P-operator relative to an orthonormal basis $\mathcal{B}=\{e_i\}_{i=1}^\infty$. Then $T$ satisfies

    $$\inf_{x \in \mathcal{H^+}} \sup_{e_i \in \mathcal{B}} \frac{\langle Tx, e_i \rangle}{\langle x, e_i \rangle} \geq 0$$
    where $\mathcal{H^+}=\{x\in \mathcal{H}:\langle x,e_i\rangle \neq 0 \text{ for all $i$}\}$. Moreover, if $T$ satisfies $Tu=\rho(T) u$ for some $u \in \mathcal{H^+}$, then

      $$\inf_{x\in \mathcal{H^+}} \sup_{e_i \in \mathcal{B}} \frac{\langle Tx, e_i \rangle}{\langle x, e_i \rangle} = \sup_{e_i \in \mathcal{B}} \inf_{x\in \mathcal{H^+}}\frac{\langle Tx, e_i \rangle}{\langle x, e_i \rangle} =\rho(T). $$

\end{theorem}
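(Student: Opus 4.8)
The plan is to treat the inequality and the equality separately: the inequality is an unconditional consequence of the sign non-reversal property defining a P-operator, by the same computation already used in Propositions \ref{p1} and \ref{p2}, while the equality is a Collatz--Wielandt / Perron-type min--max identity in which the eigenvector $u$ furnished by the hypothesis plays the role of the extremiser. I would organise the second part around the functional $(x,e_i)\mapsto\langle Tx,e_i\rangle/\langle x,e_i\rangle$ on $\mathcal H^+\times\mathcal B$ and try to exhibit a saddle point of it at $x=u$.

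For the inequality, fix $x\in\mathcal H^+$. Since $T$ is a P-operator relative to $\mathcal B$ and $x\neq 0$, sign non-reversal gives an index $j$ with $\langle x,e_j\rangle\langle Tx,e_j\rangle>0$; dividing by $\langle x,e_j\rangle^{2}>0$ gives $\langle Tx,e_j\rangle/\langle x,e_j\rangle>0$, and hence $\sup_{e_i\in\mathcal B}\langle Tx,e_i\rangle/\langle x,e_i\rangle>0$. As this holds for every $x\in\mathcal H^+$, taking the infimum over $x$ yields $\inf_{x}\sup_{e_i}\langle Tx,e_i\rangle/\langle x,e_i\rangle\ge 0$; one should not expect a strict inequality here, since an infimum of strictly positive numbers can equal $0$.

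For the equality, put $M:=\inf_{x}\sup_{e_i}\langle Tx,e_i\rangle/\langle x,e_i\rangle$ and $m:=\sup_{e_i}\inf_{x}\langle Tx,e_i\rangle/\langle x,e_i\rangle$. Since a max--min never exceeds the corresponding min--max, $m\le M$. Evaluating the ratio at $x=u$ gives $\langle Tu,e_i\rangle/\langle u,e_i\rangle=\rho(T)$ for every $i$, so $\sup_{e_i}\langle Tu,e_i\rangle/\langle u,e_i\rangle=\rho(T)$, forcing $M\le\rho(T)$, and also $\inf_{x}\langle Tx,e_i\rangle/\langle x,e_i\rangle\le\rho(T)$ for each $i$, forcing $m\le\rho(T)$. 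For the matching lower bound $M\ge\rho(T)$ I would use a positive ``left eigenvector'' $\varphi$ of $T$ at $\rho(T)$ (a positive functional with $\varphi\circ T=\rho(T)\varphi$): if $\sup_{e_i}\langle Tx,e_i\rangle/\langle x,e_i\rangle=c$ for some $x\in\mathcal H^+$, then $\langle Tx,e_i\rangle\le c\langle x,e_i\rangle$ for all $i$, so $\rho(T)\varphi(x)=\varphi(Tx)\le c\,\varphi(x)$ with $\varphi(x)>0$, whence $c\ge\rho(T)$; taking the infimum over $x$ gives $M\ge\rho(T)$, so $M=\rho(T)$.

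The hard part is the lower bound for the max--min term, and it is where additional structure must be imported. Sign non-reversal alone only yields, for each $x$, \emph{some} coordinate on which the ratio is positive, never a single coordinate that dominates $\rho(T)$ uniformly in $x$, so the existence of the positive left eigenvector $\varphi$ --- equivalently, that $T$ is positive for the cone $\mathcal H^+$ and sufficiently irreducible, of Krein--Rutman type --- has to be assumed or derived. One must also be careful about the order of $\sup$ and $\inf$ in the max--min term: the Collatz--Wielandt quantity is $\sup_{x}\inf_{e_i}\langle Tx,e_i\rangle/\langle x,e_i\rangle$, which equals $\rho(T)$ because $x=u$ makes $\inf_{e_i}$ equal $\rho(T)$ while pairing a general $x\in\mathcal H^+$ with $\varphi$ forces $\inf_{e_i}\langle Tx,e_i\rangle/\langle x,e_i\rangle\le\rho(T)$; the literally written $\sup_{e_i}\inf_{x}$ is in general strictly smaller (for instance, for the symmetric $2\times2$ P-matrix with diagonal entries $2$ and off-diagonal entries $1$ one has $\rho=3$ but $\sup_{e_i}\inf_{x}=2$). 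I would therefore make the cone-positivity hypotheses explicit and write the max--min term as $\sup_{x}\inf_{e_i}$.
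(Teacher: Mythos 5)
Your proof of the first inequality is exactly the paper's: sign non-reversal supplies an index $j$ with $\langle x,e_j\rangle\langle Tx,e_j\rangle>0$, division by $\langle x,e_j\rangle^{2}$ makes the ratio positive, and passing to $\sup_{e_i}$ and then $\inf_{x}$ gives the bound $\geq 0$; your remark that strictness cannot survive the infimum is also consistent with what the paper proves.

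For the ``moreover'' part, you and the paper agree on the upper bound obtained by plugging $x=u$ (the paper's equation (\ref{eqp41})) and on the general relation $\sup\inf\leq\inf\sup$. Where you part company is the lower bound: the paper closes its argument with equation (\ref{eqp42}), which asserts $\sup_{e_i}\inf_{x\in\mathcal H^{+}}\langle Tx,e_i\rangle/\langle x,e_i\rangle\geq\langle Tu,e_i\rangle/\langle u,e_i\rangle=\rho(T)$ by evaluating the ratio at $x=u$. Since $u\in\mathcal H^{+}$, evaluating at a particular $x$ bounds the infimum over $x$ from \emph{above}, not from below, so that step is invalid---it is precisely the step you declined to take. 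Your scepticism is justified: for the symmetric P-matrix with diagonal entries $2$ and off-diagonal entries $1$ one has $\rho=3$ and $u=(1,1)\in\mathcal H^{+}$, yet for each fixed $i$ the infimum of $\langle Tx,e_i\rangle/\langle x,e_i\rangle$ over the paper's $\mathcal H^{+}$ (which permits coordinates of either sign) is $-\infty$, and even over the positive orthant it equals $2<3$; so the second identity fails as literally written and no correct argument can rescue the paper's route. Your proposed repair---replace $\sup_{e_i}\inf_{x}$ by the Collatz--Wielandt quantity $\sup_{x}\inf_{e_i}$ and obtain $\inf_{x}\sup_{e_i}\geq\rho(T)$ from a positive left eigenvector or a Krein--Rutman-type positivity/irreducibility hypothesis---is the standard and correct formulation, and your observation that the P-property alone gives only ``some good coordinate for each $x$'', never the uniform coordinatewise control such a lower bound requires, is accurate. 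In short, the discrepancy you found is a flaw in the paper's proof (and in the statement's second identity), not a gap in your argument.
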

\begin{proof}
As $T$ is a P-operator relative to an orthonormal basis $\mathcal{B}=\{e_i\}_{i=1}^\infty$ there exist an index $j$ such that $\langle x, e_j\rangle \langle Tx, e_j \rangle >0$, thus $\frac{\langle x, e_j\rangle \langle Tx, e_j \rangle}{\langle x, e_j\rangle ^2}>0$, that is, $\frac{\langle Tx, e_j \rangle}{\langle x, e_j\rangle}>0$, so for any $x\in \mathcal{H^+}$, $\sup_{e_i\in \mathcal{H}} \frac{\langle Tx, e_j \rangle}{\langle x, e_j\rangle}>0$. Thus if we take infimum of these positive numbers we get that $\inf_{x \in \mathcal{H^+}} \sup_{e_i \in \mathcal{B}} \frac{\langle Tx, e_i \rangle}{\langle x, e_i \rangle} \geq 0$. Now

\begin{equation}\label{eqp41}
    \inf_{x \in \mathcal{H^+}} \sup_{e_i \in \mathcal{B}} \frac{\langle Tx, e_i \rangle}{\langle x, e_i \rangle} \leq \sup_{e_i \in \mathcal{B}} \frac{\langle Tu, e_i \rangle}{\langle u, e_i \rangle}=\sup_{e_i \in \mathcal{B}}\frac{\rho(T)\langle u, e_i \rangle}{\langle u, e_i \rangle}=\rho(T).
\end{equation}

\noindent Also we have,
\begin{equation}\label{eqp42}
    \sup_{e_i \in \mathcal{B}} \inf_{x\in \mathcal{ H^+}}\frac{\langle Tx, e_i \rangle}{\langle x, e_i \rangle}\geq \frac{\langle Tu, e_i \rangle}{\langle u, e_i \rangle}=\frac{\rho(T)\langle u, e_i \rangle}{\langle u, e_i \rangle}=\rho(T).
\end{equation}

Now by the well known result that for a function $\psi: X\times Y \longrightarrow X\times Y $, where $X$ and $Y$ are any two sets, we have
$$\inf_{x\in X} \sup_{y \in Y} \psi \geq \sup_{y \in Y} \inf_{x\in X}\psi$$ 

\noindent and from equations (\ref{eqp41}) and (\ref{eqp42}) we get

\begin{equation}
    \rho(T)\leq \sup_{e_i \in \mathcal{B}} \inf_{x\in \mathcal{ H^+}}\frac{\langle Tx, e_i \rangle}{\langle x, e_i \rangle} \leq \inf_{x \in \mathcal{H^+}} \sup_{e_i \in \mathcal{B}} \frac{\langle Tx, e_i \rangle}{\langle x, e_i \rangle} \leq \rho(T).
\end{equation} 
thus we get
$$\inf_{x\in \mathcal{H^+}} \sup_{e_i \in \mathcal{B}} \frac{\langle Tx, e_i \rangle}{\langle x, e_i \rangle} = \sup_{e_i \in \mathcal{B}} \inf_{x\in \mathcal{H^+}}\frac{\langle Tx, e_i \rangle}{\langle x, e_i \rangle} =\rho(T).$$
\end{proof}

\begin{theorem}
    Let $\mathcal{H}$ be a separable Hilbert space with an orthonormal basis $\{e_i\}_{i=1}^\infty$, $S,T \in \mathcal{B}(\mathcal{H})$ and $D$ be a diagonal operator such that $De_i=d_{ii}e_i$, where $0\leq d_{ii} \leq 1$. Then the following are true:
    \begin{enumerate}
        \item If $ST^{-1}$ is a P-operator relative to the orthonormal basis $\mathcal{B}$, then $0\notin \sigma_e(DT+(I-D)S)$.
        \item  If $S^{-1}T$ is a P-operator relative to the orthonormal basis $\mathcal{B}$, then $0\notin \sigma_e(TD+S(I-D))$.
    \end{enumerate}
\end{theorem}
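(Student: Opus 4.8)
The plan is to show that in each case the displayed operator is \emph{injective}, which is precisely the statement that $0$ is not an eigenvalue, i.e.\ $0\notin\sigma_e(\cdot)$. Note first that the hypothesis ``$ST^{-1}$ is a P-operator'' (respectively ``$S^{-1}T$ is a P-operator'') presupposes that $T$ (respectively $S$) is invertible in $\mathcal{B}(\mathcal{H})$; this is used to pass from the vanishing of the combined operator on a vector back to the vanishing of that vector. The whole argument is an infinite-dimensional analogue of the elementary fact that a ``convex-type'' combination $DA+(I-D)$ of a P-matrix with a diagonal $D$ having entries in $[0,1]$ is nonsingular, the essential input being the sign non-reversal property of a P-operator (the property exploited in Proposition \ref{p1}): for $v\neq 0$ there is an index $j$ with $\langle v,e_j\rangle\,\langle Xv,e_j\rangle>0$.

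For part (1), I would argue by contradiction. Suppose $x\neq 0$ satisfies $\big(DT+(I-D)S\big)x=0$. Pairing this identity with each $e_j$ and writing $a_j=\langle Tx,e_j\rangle$, $b_j=\langle Sx,e_j\rangle$, one obtains $d_{jj}a_j+(1-d_{jj})b_j=0$ for every $j$. Multiplying this relation once by $a_j$, once by $b_j$, and adding the two, the cross terms combine to $a_jb_j$, giving $a_jb_j=-\big(d_{jj}a_j^{2}+(1-d_{jj})b_j^{2}\big)\le 0$ for all $j$, since $0\le d_{jj}\le 1$. On the other hand $Tx\neq 0$ (as $T$ is invertible) and $ST^{-1}(Tx)=Sx$, so the sign non-reversal property of the P-operator $ST^{-1}$ applied to the vector $Tx$ produces an index $j$ with $\langle Tx,e_j\rangle\,\langle ST^{-1}(Tx),e_j\rangle=a_jb_j>0$, contradicting $a_jb_j\le 0$. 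Hence no such $x$ exists, and $0\notin\sigma_e\big(DT+(I-D)S\big)$.

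For part (2), suppose $z\neq 0$ satisfies $\big(TD+S(I-D)\big)z=0$, i.e.\ $TDz=-S(I-D)z$. Applying $S^{-1}$ gives $S^{-1}T(Dz)=-(I-D)z$. Put $u=Dz$, so $S^{-1}Tu=-(I-D)z$; then $\langle u,e_j\rangle=d_{jj}\langle z,e_j\rangle$ and $\langle S^{-1}Tu,e_j\rangle=-(1-d_{jj})\langle z,e_j\rangle$, whence $\langle u,e_j\rangle\,\langle S^{-1}Tu,e_j\rangle=-d_{jj}(1-d_{jj})\langle z,e_j\rangle^{2}\le 0$ for every $j$. If $u\neq 0$, this contradicts the sign non-reversal property of the P-operator $S^{-1}T$; therefore $u=Dz=0$, and then $-(I-D)z=S^{-1}Tu=0$ forces $z=Dz=0$, a contradiction. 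Thus $0\notin\sigma_e\big(TD+S(I-D)\big)$.

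The computations are routine; the only delicate point is the boundary behaviour of the diagonal entries ($d_{jj}=0$ or $d_{jj}=1$), where one cannot divide by $d_{jj}$ or $1-d_{jj}$. The ``multiply-and-add'' device in part (1) (and the substitution $u=Dz$ in part (2)) is exactly what sidesteps this, so I expect no serious obstacle once the reduction is in place. One should also decide whether to phrase the result over $\mathbb{R}$ or simply note that the relevant inner-product expressions are real (as is already done implicitly in Proposition \ref{p1}); I would keep the conventions already in force in the paper.
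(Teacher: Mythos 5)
Your proposal is correct, and it is worth separating the two parts. For part (1) you follow essentially the same route as the paper: reduce $\bigl(DT+(I-D)S\bigr)x=0$ to a componentwise relation between $a_j=\langle Tx,e_j\rangle$ and $b_j=\langle Sx,e_j\rangle$ (the paper does this by writing the equation as $\bigl(D+(I-D)ST^{-1}\bigr)y=0$ with $y=Tx$), conclude $a_jb_j\le 0$ for all $j$, and contradict the sign non-reversal property of $ST^{-1}$ applied to $Tx$; your multiply-and-add identity $a_jb_j=-\bigl(d_{jj}a_j^2+(1-d_{jj})b_j^2\bigr)$ just replaces the paper's sign case analysis and handles the endpoints $d_{jj}\in\{0,1\}$ uniformly. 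For part (2) your argument genuinely diverges from, and in fact improves on, the printed proof. The paper passes from $S^{-1}TDx+(I-D)x=0$ to the relation $(1-d_{ii})\langle x,e_i\rangle=-d_{ii}\langle S^{-1}Tx,e_i\rangle$, which tacitly uses $\langle S^{-1}TDx,e_i\rangle=d_{ii}\langle S^{-1}Tx,e_i\rangle$ — valid only if $D$ commutes with $S^{-1}T$ — and then applies the P-property to $x$ itself; it also never addresses what happens when $Dx=0$. Your substitution $u=Dz$ applies the P-property to the correct vector, yields $\langle u,e_j\rangle\langle S^{-1}Tu,e_j\rangle=-d_{jj}(1-d_{jj})\langle z,e_j\rangle^2\le 0$ without any commutation assumption, and disposes of the degenerate case $u=0$ by noting it forces $(I-D)z=0$ and hence $z=0$. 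So your write-up of part (2) is the one that actually closes the argument; the only point to keep explicit, as you already note, is that the inner products are treated as real (consistent with the paper's conventions, e.g.\ in Proposition \ref{p1}).
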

\begin{proof}
\begin{enumerate}
    \item Let $ST^{-1} $ be a P-operator relative to $\mathcal{B}$. Suppose that $0\in \sigma_e(DT+(I-D)S)$. Then there exists some $0\neq x \in \mathcal{H}$ such that $0=(DT+(I-D)S)x=(D+(I-D)ST^{-1})Tx$. Set $ y=Tx$. Then $y\neq 0$, since $T$ is invertible. Now, $Dy+(I-D)ST^{-1}y=0$ implies $\langle De_i, e_i \rangle \langle y, e_i \rangle =- \langle (I-D)e_i, e_i \rangle   \langle ST^{-1}y, e_i\rangle $. If $\langle y, e_i \rangle \geq 0$, then $\langle (ST^{-1}y), e_i\rangle \leq 0$ so that $\langle y, e_i \rangle \langle (ST^{-1}y), e_i \rangle \leq 0$. If $\langle y, e_i \rangle \leq 0$, then $\langle (ST^{-1}y), e_i\rangle \geq 0$ so that $\langle y, e_i \rangle \langle (ST^{-1}y), e_i \rangle \leq 0$. That is, $\langle y, e_i \rangle \langle (ST^{-1}y), e_i \rangle \leq 0$ for all $i$, a contradiction to $ ST^{-1}$ being a P-operator relative to $\mathcal{B}$. Thus $0\notin \sigma_e(DT+(I-D)S)$.
    
    \item Let $S^{-1}T $ be a P-operator relative to $\mathcal{B}$. Suppose that $0\in \sigma_e(TD+S(I-D))$. Then there exists some $0\neq x \in \mathcal{H}$ such that $0=(TD+S(I-D))x=S(S^{-1}TD+(I-D))x$. As $S$ is invertible, we have $S^{-1}TDx+(I-D)x=0$. Now, $\langle (I-D)e_i, e_i \rangle \langle x, e_i \rangle =- \langle De_i, e_i \rangle   \langle S^{-1}Tx, e_i\rangle $. If $\langle x, e_i \rangle \geq 0$, then $\langle (S^{-1}Tx), e_i\rangle \leq 0$ so that $\langle x, e_i \rangle \langle (S^{-1}Tx), e_i \rangle \leq 0$. If $\langle x, e_i \rangle \leq 0$, then we get $\langle (S^{-1}Tx), e_i\rangle \geq 0$ so that $\langle x, e_i \rangle \langle (S^{-1}Tx), e_i \rangle \leq 0$. That is, $\langle x, e_i \rangle \langle (S^{-1}Tx), e_i \rangle \leq 0$ for all $i$, a contradiction to $ S^{-1}T$ being a P-operator relative to $\mathcal{B}$. Thus $0\notin \sigma_e(TD+S(I-D))$.
\end{enumerate}
    
\end{proof}
Next we discuss some spectral results for sufficient operators, which is operator form of sufficient matrices (see Definition \ref{def_of_sufficient}) in Hilbert space settings defined as follows.

\begin{definition}
Let $\mathcal{B}=\{e_i\}_{i=1}^\infty$ be an orthonormal basis of $\mathcal{H}$. Then $T \in \mathcal B(\mathcal H)$ is said to be: 

\begin{enumerate}
    \item C-sufficient relative to the given orthonormal basis $\mathcal{B}$ if for $x \in \mathcal{H}$, the inequalities $\langle x, e_i \rangle \langle Tx, e_i \rangle \leq 0$ for all $i$ imply that $\langle x, e_i \rangle \langle Tx, e_i \rangle = 0$ for all $i$.
    \item R-sufficient relative to the given orthonormal basis $\mathcal{B}$ if the adjoint of $T$ is a C-sufficient operator relative to $\mathcal{B}$.
    \item sufficient if it is both a C-sufficient and R-sufficient operator relative to the given orthonormal basis $\mathcal{B}$.
\end{enumerate}

\end{definition}

\begin{definition}(\cite{XU19931})
    Let $M$ be an $n \times n (n > 2)$ real matrix. The term $\lambda$-eigenvector denotes an eigenvector $u$ corresponding to the eigenvalue $\lambda$ of $M$, i.e., $Mu = \lambda u$. If
such an eigenvector $u = (u_1,\ldots, u_n)$~ has $u_i \neq 0$ for all $i = 1,\ldots, n$, then
$u $ is called a strictly nonzero eigenvector. On the other hand, if $u_i = 0$ for at
least one $i$, then $u$ is called a partly zero eigenvector.
\end{definition}

We define this concept of partly zero eigenvector in Hilbert space settings as follows:
\begin{definition}
    Let $\mathcal{H}$ be a separable Hilbert space with an orthonormal basis $\mathcal{B}=\{e_i\}_{i=1}^\infty$. Let $T\in \mathcal{B}(\mathcal{H})$. The term $\lambda$-eigenvector denotes an eigenvector $u$ corresponding to the eigenvalue $\lambda$ of $T$, i.e., $Tu = \lambda u$. If
such an eigenvector $u$ is such that $\langle u, e_i \rangle  \neq 0$ for all $i$, then
$u $ is called a strictly nonzero eigenvector. On the other hand, if $\langle u, e_i \rangle = 0$ for at
least one $i$, then $u$ is called a partly zero eigenvector.
\end{definition}
\begin{remark}
    Let $\mathcal{H}$ be a separable Hilbert space with an orthonormal basis $\mathcal{B}=\{e_i\}_{i=1}^\infty$ and $T\in \mathcal{B}(\mathcal{H})$. Let us denote $T_\alpha$ for $\alpha \subseteq \mathbb{N}$, the restriction of $T$ to $H'=span\{e_i: i\in \alpha \}$. Any vector in $H'$ we denote by $x_\alpha$.
\end{remark}
\begin{theorem}
    Let $\mathcal{H}$ be a separable Hilbert space with an orthonormal basis $\mathcal{B}=\{e_i\}_{i=1}^\infty$. Let $T\in \mathcal{B}(\mathcal{H})$. Then the following are equivalent.
    \begin{enumerate}
        \item $T$ is a C-sufficient operator relative to an orthonormal basis $\mathcal{B}$.
        \item For any index set $\alpha \subseteq \mathbb{N}$ and non-negative diagonal operator $D_\alpha \neq 0$, if $T_{\alpha}+D_\alpha$ is not invertible, then every zero eigenvector of $T_{\alpha}+D_\alpha$ is partly zero eigenvector.
    \end{enumerate}
\end{theorem}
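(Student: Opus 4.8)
The plan is to establish the two implications separately, using throughout the fact that $T_\alpha$ is the \emph{compression} $P_{H'}T|_{H'}$ of $T$ to $H'=\overline{\mathrm{span}}\{e_i:i\in\alpha\}$, so that $\langle T_\alpha v,e_i\rangle=\langle Tv,e_i\rangle$ for every $v\in H'$ and every $i\in\alpha$. This identity is the only bridge needed between the global C-sufficiency condition (a statement about all of $\mathcal{H}$) and the behaviour of the compressed operators $T_\alpha+D_\alpha$.

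For the implication $(1)\Rightarrow(2)$ I would fix an index set $\alpha$ and a nonzero non-negative diagonal operator $D_\alpha=\mathrm{diag}(d_{ii})_{i\in\alpha}$ on $H'$ for which $T_\alpha+D_\alpha$ is not invertible, take an arbitrary zero eigenvector $x_\alpha\neq 0$, and regard it as a vector $x\in\mathcal{H}$ supported on $\alpha$. From $(T_\alpha+D_\alpha)x_\alpha=0$ I get $\langle Tx,e_i\rangle=-d_{ii}\langle x,e_i\rangle$ for $i\in\alpha$, hence $\langle x,e_i\rangle\langle Tx,e_i\rangle=-d_{ii}\langle x,e_i\rangle^2\leq 0$ for $i\in\alpha$, while the product vanishes for $i\notin\alpha$ because there $\langle x,e_i\rangle=0$. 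Thus $\langle x,e_i\rangle\langle Tx,e_i\rangle\leq 0$ for all $i$, and C-sufficiency forces $d_{ii}\langle x_\alpha,e_i\rangle^2=0$ for every $i\in\alpha$; choosing $j$ with $d_{jj}>0$ (possible since $D_\alpha\neq 0$) gives $\langle x_\alpha,e_j\rangle=0$, i.e. $x_\alpha$ is partly zero.

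For $(2)\Rightarrow(1)$ I would argue by contraposition. Suppose $T$ is not C-sufficient, witnessed by $x\in\mathcal{H}$ with $\langle x,e_i\rangle\langle Tx,e_i\rangle\leq 0$ for all $i$ and $\langle x,e_j\rangle\langle Tx,e_j\rangle<0$ for some $j$. Put $\alpha=\{i:\langle x,e_i\rangle\neq 0\}$, so that $j\in\alpha$ and $x\in H'$ is a strictly nonzero vector of $H'$, and define the diagonal operator $D_\alpha$ on $H'$ by $d_{ii}=-\langle Tx,e_i\rangle/\langle x,e_i\rangle$. Dividing the sign conditions through by $\langle x,e_i\rangle^2>0$ shows $d_{ii}\geq 0$ with $d_{jj}>0$, so $D_\alpha$ is non-negative and nonzero; and a coordinatewise check gives $\langle(T_\alpha+D_\alpha)x,e_i\rangle=\langle Tx,e_i\rangle+d_{ii}\langle x,e_i\rangle=0$ for all $i\in\alpha$, so $(T_\alpha+D_\alpha)x=0$. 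Then $T_\alpha+D_\alpha$ is not invertible and $x$ is a strictly nonzero zero eigenvector of it, which contradicts $(2)$.

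The step I expect to be the main obstacle is the verification, in the construction just described when $\alpha$ is infinite, that $D_\alpha$ is genuinely a \emph{bounded} operator: the entries $d_{ii}=-\langle Tx,e_i\rangle/\langle x,e_i\rangle$ are not controlled by $\|T\|$ in general, even though both sequences $(\langle Tx,e_i\rangle)_i$ and $(\langle x,e_i\rangle)_i$ are square-summable. I would handle this by refining the choice of witness and of $\alpha$ — for instance by replacing $x$ with a suitable finitely supported vector, or with a vector inside its support chosen so that the ratios $d_{ii}$ remain bounded — after which the rest of the argument goes through verbatim; all the remaining steps are routine coordinate computations.
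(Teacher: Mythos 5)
Your two implications follow the paper's proof essentially verbatim: for $(1)\Rightarrow(2)$ you zero-pad the eigenvector, apply C-sufficiency to the padded vector, and read off $d_{ii}\langle x_\alpha,e_i\rangle^2=0$ for all $i\in\alpha$ (the paper phrases this as a contradiction ending in $D_\alpha=0$, but it is the same computation), and for $(2)\Rightarrow(1)$ you use the same forced diagonal $d_{ii}=-\langle Tx,e_i\rangle/\langle x,e_i\rangle$ on the support $\alpha=\{i:\langle x,e_i\rangle\neq 0\}$ of a sign-reversing witness. The boundedness issue you flag for infinite $\alpha$ is genuine, but the paper's own proof simply ignores it: it asserts the existence of a non-negative diagonal $D\neq 0$ with $T_\alpha x_\alpha=-Dx_\alpha$ without verifying that $D$ is a bounded operator, so on this point your write-up is already at (indeed above) the paper's level of rigor. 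Be aware, though, that your proposed repairs do not work as described: once the witness $x$ and $\alpha$ are fixed, the entries $d_{ii}$ are completely forced by the equation $(T_\alpha+D_\alpha)x=0$, and replacing $x$ by a finitely supported truncation changes $Tx$ and so destroys the hypothesis $\langle x,e_i\rangle\langle Tx,e_i\rangle\le 0$. A genuine fix would have to either allow unbounded (densely defined) diagonal operators in statement $(2)$ --- note that $x$ does lie in the domain of the forced $D_\alpha$, since $\sum_{i\in\alpha} d_{ii}^2|\langle x,e_i\rangle|^2=\sum_{i\in\alpha}|\langle Tx,e_i\rangle|^2<\infty$ --- or else restrict to bounded $D_\alpha$ and prove an implication that neither you nor the paper currently establishes.
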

\begin{proof} 
$(1) \Rightarrow (2)$: Suppose there exists an index set $\alpha \subseteq \mathbb{N}$, a nonnegative diagonal operator $D_\alpha \neq 0$, and a strictly nonzero $0$-eigenvector $x_\alpha$ such that $(T_\alpha+D_\alpha)x_\alpha=0$. Define a vector $y$ such that $y_\alpha = x_\alpha$ and
$y_{\overline{\alpha}} = 0$. Then $\langle y, e_i \rangle \langle Ty, e_i \rangle \leq 0$ for all $i$. It follows from the hypothesis that $\langle y, e_i \rangle \langle Ty, e_i \rangle = 0$ for all $i$. Since $x_\alpha$, is a strictly nonzero vector, we have $-Dx_\alpha = T_\alpha x_\alpha=0$, hence $D_\alpha=0$, which is a contradiction. Thus the zero eigenvector of $T_\alpha+D_\alpha$ is partly zero eigenvector.
\par   $(2) \Rightarrow (1)$: Suppose $\langle y, e_i \rangle \langle Ty, e_i \rangle \leq 0$ for any $i$. Let $\alpha=\{i:\langle x, e_i \rangle \neq 0\}$. If $T_\alpha x_\alpha \neq 0$, then there exists a nonnegative diagonal matrix $D \neq 0$ such
that $T_a\alpha= -Dx$. It follows that there exists a strictly nonzero vector x, satisfying $(T_\alpha+D)x_\alpha=0$, which is a contradiction. Hence $T_\alpha x_\alpha =0$, therefore, $\langle y, e_i \rangle \langle Ty, e_i \rangle \leq 0$ for any $i$. This completes the proof of the theorem. 
\end{proof}

\begin{theorem}
    Let $\mathcal{H}$ be a separable Hilbert space with an orthonormal basis $\{e_i\}_{i=1}^\infty$ and $T$ be a C-sufficient operator relative to $\mathcal{B}$. Then any eigenvector corresponding to a non-zero eigenvalue is not an element of $rev_{\mathcal{B}}(T)$.
\end{theorem}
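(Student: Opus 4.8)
The plan is to argue by contradiction directly from the two relevant definitions. Recall that, as in the sign-reversal terminology of the Introduction, $rev_{\mathcal{B}}(T)$ denotes the set of vectors whose sign is reversed by $T$ relative to $\mathcal{B}$, namely $rev_{\mathcal{B}}(T)=\{x\in\mathcal{H}:\langle x,e_i\rangle\langle Tx,e_i\rangle\le 0\text{ for all }i\}$; this is the Hilbert-space analogue of the condition $x_i(Ax)_i\le 0$ for all $i\in\langle n\rangle$ used to define sign non-reversal for matrices.

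First I would let $u\neq 0$ be an eigenvector of $T$ with $Tu=\lambda u$ and $\lambda\neq 0$, and suppose, contrary to the claim, that $u\in rev_{\mathcal{B}}(T)$, so that $\langle u,e_i\rangle\langle Tu,e_i\rangle\le 0$ for every $i$. Since $T$ is C-sufficient relative to $\mathcal{B}$, the defining implication, applied to $x=u$, upgrades this to $\langle u,e_i\rangle\langle Tu,e_i\rangle=0$ for all $i$. Substituting $Tu=\lambda u$ gives $\langle Tu,e_i\rangle=\lambda\langle u,e_i\rangle$, hence $\lambda\langle u,e_i\rangle^2=0$ for all $i$; as $\lambda\neq 0$ this forces $\langle u,e_i\rangle=0$ for every $i$. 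Because $\{e_i\}_{i=1}^\infty$ is an orthonormal basis of $\mathcal{H}$, a vector orthogonal to every $e_i$ is zero, so $u=0$, contradicting that $u$ is an eigenvector. Therefore no eigenvector corresponding to a nonzero eigenvalue lies in $rev_{\mathcal{B}}(T)$.

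The argument is essentially immediate once the definitions are unwound; the only points requiring a word of care are making explicit the meaning of $rev_{\mathcal{B}}(T)$ (which the statement uses without re-stating it) and the convention, already in force in the proof of Proposition \ref{p1}, that the coefficients $\langle u,e_i\rangle$ are treated as real so that the inequality $\le 0$ and the expression $\langle u,e_i\rangle^2$ are meaningful. I do not anticipate a genuine obstacle here: the one conceptual step is recognising that C-sufficiency is precisely the hypothesis that turns ``$\le 0$ for all $i$'' into ``$=0$ for all $i$,'' which, combined with $Tu=\lambda u$ and $\lambda\neq 0$, collapses $u$ to $0$.
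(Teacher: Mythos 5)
Your argument is correct and coincides with the paper's own proof: both assume the eigenvector lies in $rev_{\mathcal{B}}(T)$, use C-sufficiency to upgrade $\langle u,e_i\rangle\langle Tu,e_i\rangle\le 0$ to equality, substitute $Tu=\lambda u$ to get $\lambda\langle u,e_i\rangle^2=0$, and conclude $u=0$, a contradiction. No differences of substance; your added remarks on the meaning of $rev_{\mathcal{B}}(T)$ and the realness convention are just helpful clarifications.
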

\begin{proof}
    Let $\lambda \neq 0 \in \sigma_e(T)$. Then there exists a $x \neq 0 \in \mathcal{H}$ such that $Tx=\lambda x$. If $x \in rev_{\mathcal{B}}(T)$, then $\langle x, e_i \rangle \langle Tx, e_i \rangle \leq 0$ for all $i$. As $T$ is C-sufficient operator we have $\langle x, e_i \rangle \langle Tx, e_i \rangle = 0$ for all $i$, that is, $\langle x, e_i \rangle \langle \lambda x, e_i \rangle = 0$ for all $i$, Thus $\lambda \langle x, e_i \rangle^2 = 0$ for all $i$, this implies that $\langle x, e_i \rangle = 0$ for all $i$, thus $x=0$. This is a contradiction. Hence $x \notin rev_{\mathcal{B}}(T)$.
\end{proof}

{\bf Data Availability}
The authors confirms that we have not used any external data for this research work.

{\bf Declaration on Conflicts of Interest.}
We have no conflicts of interest to disclose. All authors declare that they have no conflicts of interest.

{\bf Acknowledgment.}  The first author thanks the National Institute of Technology Karnataka (NITK) Surathkal for giving financial assistance.

{\bf Funding.} This research received no specific funding. All authors declare that they have no conflicts of interest and competing interests.

{\bf Compliance with Ethical Standards.}
Not applicable.


\end{document}